\theoremstyle{plain}
\newtheorem{thm}{Theorem}[section]
\newtheorem{lma}{Lemma}[section]
\theoremstyle{definition}
\newtheorem{defin}{Definition}[section]
\theoremstyle{remark}
\newtheorem{rem}{Remark}[section]
\newcommand{\ep}{\varepsilon}
\newcommand{\eps}{\epsilon}
\newcommand{\LL}{\langle}
\newcommand{\RR}{\rangle}
\newcommand{\pd}{\partial}
\newcommand{\Ex}{\mathbb{E}}
\newcommand{\vp}{\varphi}
\newcommand{\nb}{\mathrm{nb}}
\newcommand{\A}{\mathbf{A}}
\newcommand{\B}{\mathbf{B}}
\newcommand{\ab}{{\boldsymbol\alpha}}
\newcommand{\mX}{\mathfrak{X}}
\newcommand{\se}{\mathcal{S}}
\newcommand{\1}{\mathbf{1}}
\newcommand{\mY}{\mathfrak{Y}}
\renewcommand{\d}{{\rm d}}
\newcommand{\R}{\mathbb{R}}
\newcommand{\T}{\mathbb{T}}
\numberwithin{equation}{section}
\title[Invariant Measures for Nonlinear Stochastic Balance Laws]
{Invariant Measures for Nonlinear Conservation Laws Driven by Stochastic Forcing}
\author
{Gui-Qiang G. Chen \qquad Peter H.C. Pang}
\address{Gui-Qiang G. Chen,
Mathematical Institute, University of Oxford,
Oxford, OX2 6GG, UK}
\email{chengq@maths.ox.ac.uk}
\address{Peter Pang,
Mathematical Institute, University of Oxford,
Oxford, OX2 6GG, UK}
\email{ho.pang@jesus.ox.ac.uk}
\keywords{Stochastic solutions, entropy solutions, invariant measures, existence, uniqueness,
stochastic forcing, anisotropic degenerate,
parabolic-hyperbolic equations, long-time behavior}
\subjclass[2010]{Primary: 35B40, 35K65, 37-02, 37A50, 37C40, 60H15. Secondary: 35Q35, 58J70, 60G51, 60J65}
\begin{document}
\begin{abstract}
Some recent developments in the analysis of long-time behaviors of stochastic solutions of nonlinear conservation laws
driven by stochastic forcing are surveyed.
The existence and uniqueness of invariant measures are established for anisotropic degenerate
parabolic-hyperbolic conservation laws of second-order driven by white noises.
Some further developments, problems, and challenges in this direction are also discussed.
\end{abstract}
\maketitle

\section{Introduction}
The analysis of long-time behaviors of global solutions is the second pillar
in the theory of partial differential equations (PDEs),
after the analysis of well-posedness.
For the analysis of solution behaviors in the asymptotic regime,
we seek to understand the global properties of the solution map, such as attracting or repelling sets,
stable and unstable fixed points, limiting cycles, or chaotic behaviors that are properly determined
by the entire system rather than a given path.

The introduction of noises usually serves to model dynamics phenomenologically -- dynamics too complicated to model from first principles,
or dynamics only the statistics of which are accurately known, or dynamics almost inherently random such as the decision of
many conscious agents -- or a combination of such behaviors.
Mathematically, noises introduce behaviors that differ from deterministic dynamics,
displaying much richer phenomena such as effects of dissipation,
ergodicity, among others ({\it cf.} \cite{Ver1981,FGP2010,DFV2014,FFPV2017,KR2005,DFPR2013}
and the references cited therein).
These phenomena are  of intrinsic interest.

In this paper, we focus our analysis mainly on white-in-time noises.
Indeed, they are the most commonly studied class of noises, though space-time white noises (such as in \cite{DD2002})
and more general rough fluxes ({\it e.g.} \cite{LPS2013,LPS2014,PZ2007}) have also been considered.
The reason for the prevalence of white noises as a basic model is not difficult to understand.
First, Brownian motion occupies the unusual position of being simultaneously a martingale and a L\'evy process.
More importantly,  with increments that are not only independent but also normally distributed,
it commands a level of universality by virtue of the central limit theorem.
Some of the ideas, techniques, and approaches presented here can be applied to equations with more general
or other forms of stochastic forcing.

The organization of this paper is as follows:
In \S 2,
the notion of invariant measures is first introduced,
then the Krylov-Bogoliubov approach
for the existence of invariant measures is presented,
and some methods for the uniqueness of invariant measures
including the strong Feller property and the coupling method are discussed.
In \S3, some recent developments in the analysis of long-time behaviors of solutions
of nonlinear stochastic PDEs are discussed.
In \S4, we establish the existence of invariant measures for nonlinear anisotropic parabolic-hyperbolic equations driven by white noises.
In \S5, we establish the uniqueness of invariant measures  for the stochastic anisotropic parabolic-hyperbolic equations.
In \S6, we present some further developments,  problems, and challenges
in this research direction.

\section{Invariant Measures}

In this section,  we first introduce the notion of invariant measures for random dynamic systems,
and then present several approaches to establish the
existence and uniqueness of invariant measures.

\subsection{Notion of invariant measures}

The notion of invariant measures on a dynamical system is quite straightforward.
Let $(\mX,\Sigma,\mu)$ be a measure space, and let $S: \mX \to \mX$ be a map.
System $(\mX,\Sigma,\mu,S)$ is a measure-preserving system
if $\mu(S^{-1}A) = \mu(A)$ for any $A \in \Sigma$.
Then $\mu$ is called an invariant measure of map $S$.

On a random dynamic system (RDS), there is an added layer of complexity.
We follow the standard definitions in \cite{Arn1998};
see also \cite{Cra2002} for further references on RDSs and \cite{Fla1991,Fla1995}
in a specifically parabolic SPDE context.

Let $(\Omega, \mathcal{F}, \mathbb{P})$ be a probability space,
and let $\theta_t: \Omega \to \Omega$ be a collection of probability-preserving maps.
A {\it measurable RDS} on a measurable space $(\mX,\Sigma)$ over
quadruple $(\Omega, \mathcal{F}, \mathbb{P}, \theta_t)$ is a map:
\[
\varphi\,:\, \mathbb{R} \times \Omega \times \mX \to \mX
\]
satisfying the following:
\begin{itemize}
\item[(i)] Measurability: $\,\varphi$ is $(\mathcal{B}(\mathbb{R})\otimes \mathcal{F}\otimes \Sigma, \Sigma)$--measurable{\rm ;}
\item[(ii)] Cocycle property: $\,\varphi(t,\omega) = \varphi(t,\omega,\cdot): \mX \to \mX$ is a cocyle over $\theta$:
\begin{align*}
&\varphi(0,\omega) = \mathrm{id}_\mX,\\
&\varphi(t + s,\omega) =  \varphi(s,\theta_t \omega) \varphi(t,\omega),
\end{align*}
where $\mathcal{B}(\mathbb{R})$ denotes the collection of Borel sets in $\mathbb{R}$.
\end{itemize}

We think of $\Omega \times \mX \to \Omega$ as a fibre bundle with fibre $\mX$.
On the bundle, we have the {\it skew product} defined as $\Theta_t = (\theta_t,\varphi)$.
Then the invariant measures can be defined as follows:

\begin{defin}[Invariant measures]
An {\it invariant measure} on a RDS $\varphi$ over $\theta_t$ is a probability measure $\mu$
on $(\Omega \times \mX, \mathcal{F}\otimes \Sigma)$ satisfying
\[
(\Theta_t)_*\mu = \mu,\quad\; \mu(\cdot,\mX) = \mathbb{P},
\]
where $(\Theta_t)_*\mu:=\mu\circ (\Theta_t)^{-1}$ is the pushforward measure.
\end{defin}

Any probability measure $\mu$ on $\Omega \times \mX$ admits a disintegration:
\[
\mu(\omega, u) = \nu_\omega(u) \mathbb{P}(\omega).
\]
A measure $\nu_\omega$ is {\it stationary} if
\[
\varphi(t,\omega)_*\nu_\omega = \nu_{\theta_t\omega}.
\]
Let $\{\mathscr{F}_t\}_{t \ge 0}$ be a filtration associated with the RDS $\varphi$, {\it i.e.} an increasing
sequence of $\sigma$--sub-algebras of $\mathscr{F}$ by which $\varphi(t,\cdot,x)$ is measurable (adapted).
A {\it Markov} invariant measure is an invariant measure for which
map: $\omega \mapsto \nu_\omega(\Gamma)$ is $(\mathscr{F}_0,\mathcal{B}(\mathbb{R}))$--measurable
for any $\Gamma \in \mathcal{B}(\mX)$ \cite[\S 4.2.1]{KS2012}.

The disintegration of measures is unique. There is a one-to-one correspondence
between a Markov invariant measure and a stationary measure  \cite{Cra1991,Led1986,LeJ1987}.
Associated with an invariant measure is a random attracting set,
which is generalized from the deterministic context ({\it cf.} \cite{CF1994,CDF1997}).
In the context of dissipative PDEs perturbed by noises, it can be shown that the Hausdorff dimension
of an attracting set is finite by the methods similar to those used in the deterministic
case (see \cite{Tem1997} and the references cited therein) of linearizing the flow and estimating the sums
of global Lyapunov exponents ({\it cf}. \cite{CF1994,Sch1997,Deb1997,Deb1998}).

\subsection{Approaches for the existence of invariant measures}\label{sec:krylov_bogoliubov}

There are several approaches to establish the existence of invariant measures.
One of the approaches is the Krylov-Bogoliubov approach, as we are going to discuss here.
Another approach is via Khasminskii's theorem \cite{DZ2008}.
Both of them are based on the compactness property
provided by the Prohorov theorem.

We first recall that a sequence of probability measures $\{\nu_n\}$ on a measure space $\mX$ is \emph{tight}
if, for every $\eps > 0$, there is a compact set $K_\eps \subseteq \mX$ such that
\[
\nu_n(\mX\backslash K_\eps) \leq \eps  \qquad\,\,\, \mbox{uniformly in $n$}.
\]

\begin{lma}[Prohorov theorem]
A tight sequence of probability measures $\nu_n$ is weak*--compact in the space of \emph{probability} measures{\rm ;}
that is, there exist a subsequence {\rm (}still denoted{\rm )}
$\nu_n$ and  a probability measure $\nu$
such that $\nu_n \overset{*}{\rightharpoonup} \nu$.
\end{lma}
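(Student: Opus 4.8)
**Proof plan for the Prohorov theorem (tightness $\Rightarrow$ weak* relative compactness).**

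The plan is to reduce the problem to a compact metric space, where the Riesz representation theorem identifies probability measures with a weak* compact subset of the dual of $C(K)$, and then transfer the limiting measure back. First I would fix a countable family of compact sets $K_m \subseteq \mX$ witnessing tightness: for each $m \ge 1$, choose $K_m$ compact with $\nu_n(\mX \setminus K_m) \le \frac1m$ uniformly in $n$, and set $\mX_0 = \bigcup_m K_m$. Then every $\nu_n$ is concentrated on the $\sigma$-compact set $\mX_0$ up to arbitrarily small mass, so no generality is lost in working there; since $\mX$ is assumed Polish (a separable completely metrizable space, the standing hypothesis for the measures to be Radon so that tightness is the right notion), $\mX_0$ is separable and metrizable, and we may embed $\mX_0$ homeomorphically into a compact metric space $\widehat{\mX}$ — for instance via the standard embedding of a separable metric space into the Hilbert cube $[0,1]^{\mathbb N}$ and then taking closures.

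Next I would push the measures forward: let $\widehat{\nu}_n$ be the image of $\nu_n$ under the embedding $\iota \colon \mX_0 \hookrightarrow \widehat{\mX}$ (extended by zero off $\mX_0$), so each $\widehat{\nu}_n$ is a Borel probability measure on the compact metric space $\widehat{\mX}$. By the Riesz representation theorem, $\widehat{\nu}_n$ lives in the closed unit ball of $C(\widehat{\mX})^*$, which is weak* compact by Banach–Alaoglu, and metrizable because $C(\widehat{\mX})$ is separable. Hence there is a subsequence (not relabelled) and a positive linear functional, i.e.\ a finite Borel measure $\widehat{\nu}$ on $\widehat{\mX}$ with $\widehat{\nu}(\widehat{\mX}) \le 1$, such that $\int f \,\d\widehat{\nu}_n \to \int f\,\d\widehat{\nu}$ for every $f \in C(\widehat{\mX})$.

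It remains to show that $\widehat{\nu}$ is supported in $\iota(\mX_0)$ and has total mass $1$, so that it pulls back to a probability measure $\nu$ on $\mX$ and the convergence $\int g\,\d\nu_n \to \int g\,\d\nu$ holds for all bounded continuous $g$ on $\mX$. This is the step I expect to be the main obstacle, since it is exactly where tightness must be used and where one must be careful that mass does not escape to $\widehat{\mX} \setminus \iota(\mX_0)$. The argument: for each $m$, the set $\iota(K_m)$ is compact, hence closed in $\widehat{\mX}$, so by the portmanteau inequality for closed sets, $\widehat{\nu}(\iota(K_m)) \ge \limsup_n \widehat{\nu}_n(\iota(K_m)) \ge 1 - \frac1m$; letting $m \to \infty$ gives $\widehat{\nu}(\iota(\mX_0)) = 1$, so in particular $\widehat{\nu}$ is a probability measure concentrated on $\iota(\mX_0)$. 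Defining $\nu := (\iota^{-1})_*\widehat{\nu}$ on $\mX$, one checks that $\nu$ is inner regular on the $\sigma$-compact set $\mX_0$ and hence a genuine (Radon) probability measure on $\mX$. Finally, for bounded continuous $g$ on $\mX$ one approximates $g$ restricted to $\iota(\mX_0)$ by continuous functions on $\widehat{\mX}$ using the Tietze extension theorem together with the tightness tails $\nu_n(\mX \setminus K_m) \le \frac1m$ to control the error outside $K_m$; this yields $\int g\,\d\nu_n \to \int g\,\d\nu$, i.e.\ $\nu_n \overset{*}{\rightharpoonup} \nu$, completing the proof. I would remark that the converse direction (weak* relative compactness $\Rightarrow$ tightness), which is also part of Prohorov's theorem but not needed here, uses the Polish structure more essentially and can be cited rather than reproduced.
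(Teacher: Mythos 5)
The paper states this lemma as a classical citation and supplies no proof of it, so there is no in-paper argument to compare yours against; judged on its own terms, your plan is the standard and correct proof of the sufficiency half of Prohorov's theorem (compactify, apply Riesz plus Banach--Alaoglu, then use tightness to prevent escape of mass), essentially as in Billingsley or Parthasarathy. Two small remarks. First, you are right that one must assume $\mX$ is Polish (or at least separable metric) for the Hilbert-cube embedding; the paper never says so explicitly, but in its applications $\mX$ is $L^1(\mathbb{T}^d)$ or a Sobolev space, so the hypothesis is harmless and worth stating, as you do. Second, your phrase that each $\nu_n$ is concentrated on $\mX_0=\bigcup_m K_m$ ``up to arbitrarily small mass'' can be sharpened: since $\nu_n(\mX\setminus K_m)\le \frac1m$ for \emph{every} $m$, in fact $\nu_n(\mX\setminus\mX_0)=0$ exactly, which is what makes the pushforward $\widehat{\nu}_n$ a genuine probability measure on the compactification. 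The delicate step you single out --- using the portmanteau inequality for the closed sets $\iota(K_m)$ to get $\widehat{\nu}(\iota(\mX_0))=1$ --- is indeed where tightness enters and your argument there is sound (note the inequality $\widehat{\nu}(F)\ge\limsup_n\widehat{\nu}_n(F)$ for closed $F$ holds for weak* limits even before one knows $\widehat{\nu}$ is a probability measure, since it only uses continuous functions dominating $\mathbf{1}_F$ from above on a compact space). The final transfer back to $\mX$ via Tietze extension and the uniform tail bounds is also standard and correct.
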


\begin{thm}[Krylov-Bogoliubov theorem]\label{thm:KrylovBogoliubov}
Let $\mathscr{P}_t$ be a semigroup satisfying the {\emph Feller property} that $\phi \in C(\mX)$ implies $\mathscr{P}_s\phi \in C(\mX)$ for any $s > 0$,
and let $\mu$ be a probability measure on $\mX$ such that the measure sequence{\it :}
\begin{equation}\label{2.1-a}
\nu_T = \frac{1}{T} \int_0^T \mathscr{P}_t^*\mu\;\d t
\end{equation}
is tight.
Then there exists an invariant measure for $\mathscr{P}_t$.
\end{thm}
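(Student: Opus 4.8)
The plan is to produce an invariant measure as a weak* limit point of the averaged measures $\nu_T$ and then verify invariance using the Feller property. Since $\{\nu_T\}_{T>0}$ is tight, the Prohorov theorem (Lemma above) provides a sequence $T_n \to \infty$ and a probability measure $\nu$ on $\mX$ with $\nu_{T_n} \overset{*}{\rightharpoonup} \nu$. The candidate invariant measure is this $\nu$; it remains to show $\mathscr{P}_s^*\nu = \nu$ for every $s>0$, which by the Riesz representation identification of measures with functionals on $C_b(\mX)$ is equivalent to $\int_\mX \mathscr{P}_s\phi \,\d\nu = \int_\mX \phi\,\d\nu$ for all $\phi \in C_b(\mX)$.

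First I would fix $\phi \in C_b(\mX)$ and $s>0$, and compute, using the semigroup property and Fubini,
\begin{equation*}
\int_\mX \mathscr{P}_s\phi\,\d\nu_T = \frac{1}{T}\int_0^T \int_\mX \mathscr{P}_{t+s}\phi\,\d\mu\,\d t = \frac{1}{T}\int_s^{T+s}\int_\mX \mathscr{P}_t\phi\,\d\mu\,\d t.
\end{equation*}
Subtracting $\int_\mX \phi\,\d\nu_T = \frac{1}{T}\int_0^T \int_\mX \mathscr{P}_t\phi\,\d\mu\,\d t$, the integrals over $[s,T]$ cancel and one is left with the difference of an integral over $[T,T+s]$ and an integral over $[0,s]$, each of length $s$. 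Since $\mathscr{P}_t$ is a Markov (contraction) semigroup, $\|\mathscr{P}_t\phi\|_\infty \le \|\phi\|_\infty$ uniformly in $t$, so
\begin{equation*}
\Big|\int_\mX \mathscr{P}_s\phi\,\d\nu_T - \int_\mX \phi\,\d\nu_T\Big| \le \frac{2s\|\phi\|_\infty}{T} \longrightarrow 0 \quad \text{as } T \to \infty.
\end{equation*}

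The Feller property enters precisely here: it guarantees $\mathscr{P}_s\phi \in C_b(\mX)$, so that both $\int_\mX \mathscr{P}_s\phi\,\d\nu_{T_n}$ and $\int_\mX \phi\,\d\nu_{T_n}$ converge along the chosen subsequence to $\int_\mX \mathscr{P}_s\phi\,\d\nu$ and $\int_\mX \phi\,\d\nu$, respectively. Passing to the limit along $T_n \to \infty$ in the inequality above yields $\int_\mX \mathscr{P}_s\phi\,\d\nu = \int_\mX \phi\,\d\nu$. Since $\phi \in C_b(\mX)$ and $s>0$ were arbitrary, $\nu$ is invariant. I expect the only genuinely delicate point to be the interchange of limits and the role of the Feller property — without it, weak* convergence of $\nu_{T_n}$ tests only against continuous bounded functions and gives no control on $\int \mathscr{P}_s\phi\,\d\nu_{T_n}$; the boundedness/contraction of $\mathscr{P}_t$ supplies the uniform tail estimate, while tightness is what makes the limit $\nu$ a genuine probability measure rather than losing mass at infinity.
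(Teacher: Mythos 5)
Your proposal is correct and follows essentially the same route as the paper: extract a weak* limit $\nu$ of $\nu_{T_n}$ via Prohorov, use the time-shift cancellation $\frac{1}{T}\int_0^T\langle\mathscr{P}^*_{t+s}\mu,\phi\rangle\,\d t - \frac{1}{T}\int_0^T\langle\mathscr{P}^*_{t}\mu,\phi\rangle\,\d t = O(s/T)$, and invoke the Feller property exactly where the paper does, namely to ensure $\mathscr{P}_s\phi$ is continuous so that weak* convergence applies to it. Your explicit bound $2s\|\phi\|_\infty/T$ makes precise the step the paper states qualitatively.
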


The key of its proof is
that the invariant measure generated by the Krylov-Bogoliubov theorem
is the weak*--limit of $\nu_T$ as $T\to \infty$.
This can be seen as follows:
By the Prohorov theorem, the tight sequence has a weakly converging subsequence (still denoted as) $\{\nu_T\}$
for $T$ ranging over a unbounded subset of $\mathbb{R}$, whose limit is $\nu_*$. Then
\begin{align*}
\LL \mathscr{P}^*_s \nu_*, \vp\RR_{\mX}
 = &\, \LL  \nu_*,  \mathscr{P}_s\vp \RR_\mX\\
 = &\,\lim_{T \to \infty} \frac{1}{T}\int_0^T  \LL  \mathscr{P}^*_t \mu, \mathscr{P}_s\vp\RR_\mX \;\d t \\
= &\, \lim_{T \to \infty} \frac{1}{T}\int_0^T  \LL \mathscr{P}^*_{t+s} \mu, \vp\RR_\mX \;\d t \\
= &\, \lim_{T \to \infty} \frac{1}{T}\int_0^T  \LL  \mathscr{P}^*_{t} \mu, \vp\RR_\mX \;\d t
+ \lim_{T \to \infty} \frac{1}{T}\int_T^{T + s}  \LL \mathscr{P}^*_{t} \mu, \vp\RR_\mX \;\d t
\\
&\, - \lim_{T \to \infty} \frac{1}{T}\int_0^{s} \LL \mathscr{P}^*_{t} \mu, \vp\RR_\mX \;\d t \\
=&\,\lim_{T\to \infty}\int_\mX \vp(u)d\nu_T.
\end{align*}
In the above, we require the Feller property to execute the first equality, as $\mathscr{P}_s \vp$ has to remain continuous.
With this, the second and third terms after the fourth equality above
tend to zero in the limit $T \to \infty$, as $s$ is fixed.

The following lemma provides two sufficient conditions for the tightness of $\{\nu_T\}$.

\begin{lma} A measure sequence $\{\nu_T\}$ is tight if one of the following conditions holds{\rm :}
\begin{enumerate}
\item[(i)] $\{\mathscr{P}_t^*\mu\}$  is tight{\rm ;}

\smallskip
\item[(ii)] $\{\mathscr{P}_t\}$ are compact for $t > 0$ so that
$$
\mathscr{P}_t(\mX) \subseteq \mY \qquad \,\,\mbox{for almost all $t>0$}
$$
for a Banach space $\mY$ such that there is a compact embedding $\mY \hookrightarrow \mX$
and there exists $C>0$ independent of $T$ so that
\begin{align}\label{eq:bbd_avg}
\frac{1}{T}\int_0^T \|\mathscr{P}_tu_0\|_{\mY}\;\d t  \le  C
\qquad\mbox{for any $u_0\in \mX$};
\end{align}
In addition, $\mu=\delta_{u_0}$ for some $u_0\in \mX$.
\end{enumerate}
\end{lma}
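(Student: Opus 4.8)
The plan is to obtain the tightness of $\{\nu_T\}$ from that of its time slices by a straightforward averaging argument in case (i), and from a Chebyshev-type estimate together with the compact embedding in case (ii). In both cases the only input is the defining formula \eqref{2.1-a}, the elementary fact that averaging in $t$ a uniform-in-$t$ bound on $(\mathscr{P}_t^*\mu)(A)$ preserves it, and — for (ii) — the Markov inequality.

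For (i), I would fix $\eps>0$ and use the tightness of $\{\mathscr{P}_t^*\mu\}$ to choose a compact set $K_\eps\subseteq\mX$ with $(\mathscr{P}_t^*\mu)(\mX\setminus K_\eps)\le\eps$ uniformly in $t\ge0$. Since Fubini's theorem and \eqref{2.1-a} give $\nu_T(\mX\setminus K_\eps)=\frac1T\int_0^T(\mathscr{P}_t^*\mu)(\mX\setminus K_\eps)\,\d t$, the bound $\nu_T(\mX\setminus K_\eps)\le\eps$ follows at once, uniformly in $T>0$. This settles (i) with no real obstacle.

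For (ii), note first that, since the inclusion $\mY\hookrightarrow\mX$ is compact, the closure $K_R$ in $\mX$ of the ball $B_R:=\{v\in\mY:\|v\|_\mY\le R\}$ is a compact subset of $\mX$; note also that $\|\cdot\|_\mY$, extended by $+\infty$ off $\mY$, is lower semicontinuous — hence Borel — on $\mX$ because the inclusion is continuous. Since $\mX\setminus K_R\subseteq\{v\in\mX:\|v\|_\mY>R\}$, the Markov inequality gives, for almost every $t>0$,
\[
(\mathscr{P}_t^*\delta_{u_0})(\mX\setminus K_R)\le(\mathscr{P}_t^*\delta_{u_0})\big(\{\|v\|_\mY>R\}\big)\le\frac1R\int_\mX\|v\|_\mY\,\d(\mathscr{P}_t^*\delta_{u_0})(v)=\frac1R\,\|\mathscr{P}_tu_0\|_{\mY},
\]
where $\|\mathscr{P}_tu_0\|_\mY$ is to be read as $\int_\mX\|v\|_\mY\,\d(\mathscr{P}_t^*\delta_{u_0})(v)$, the average $\mY$-norm of the orbit at time $t$. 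Averaging over $t\in[0,T]$ and invoking \eqref{eq:bbd_avg} then yields $\nu_T(\mX\setminus K_R)\le\frac{1}{RT}\int_0^T\|\mathscr{P}_tu_0\|_\mY\,\d t\le\frac CR$, uniformly in $T$, so that given $\eps>0$ it suffices to take $R=C/\eps$.

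The only delicate points are bookkeeping rather than substance: one must check that the superlevel sets of $\|\cdot\|_\mY$ are $\mathscr{P}_t^*\delta_{u_0}$-measurable — handled by the lower semicontinuity just noted — and that the hypothesis $\mathscr{P}_t(\mX)\subseteq\mY$ holding only for almost all $t$ suffices for the time integral in \eqref{eq:bbd_avg} to make sense; both are routine. The point worth emphasizing is that the time-averaging in \eqref{eq:bbd_avg} is exactly what rescues tightness even when $\{\mathscr{P}_t^*\delta_{u_0}\}$ itself is not tight: the $\mY$-norm of the orbit need not stay bounded as $t\to\infty$, only bounded on average, which is the form in which a priori estimates are typically available for dissipative PDEs and is what makes the Krylov--Bogoliubov scheme of Theorem~\ref{thm:KrylovBogoliubov} applicable here.
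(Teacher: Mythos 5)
Your proposal is correct and follows essentially the same route as the paper's proof: averaging the uniform-in-$t$ bound for (i), and the Markov/Chebyshev inequality applied to $\|\cdot\|_\mY$ together with the compact embedding and the time-averaged bound \eqref{eq:bbd_avg} for (ii). The measurability and closure refinements you add are sensible bookkeeping but do not change the argument.
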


\begin{proof}
For (i), we know that, for any $\eps>0$, there is a compact set $K_\eps\subseteq \mX$ such that
$$
\mathscr{P}_t^*\mu(\mX\setminus K_\eps)\le \eps  \qquad \mbox{uniform in $t>0$}.
$$
Then
$$
\nu_T(\mX\setminus K_\eps)\le \frac{1}{T}\int_0^T
\mathscr{P}_t^*\mu(\mX\setminus K_\eps)\, dt\le \eps.
$$

\medskip
For (ii),
let $K_R = \{ u\in \mX : \|u\|_\mY \leq R\}$.
Since $\mY \hookrightarrow \mX$ is compact, $K_R$ is compact in $\mX$.

\medskip
If $u \in \mX \setminus K_R$, then $\|u\|_\mY > R$.
Writing $f(\cdot) = \|\cdot\|_\mY$,
then
\begin{align*}
\nu_T(\mX\setminus K_R) \leq  &\, \nu_T(\{f(u) > R\}).
\end{align*}
Applying the Markov inequality to $f$, we have
\begin{align*}
\nu_T(\mX\setminus K_R) \leq &\, \frac{1}{R} \int_\mX f(u)\, \d \nu_T(u)\\
= &\, \frac{1}{RT} \int_0^T \int_\mX (\mathscr{P}_tf)(u)\, \d\mu(u) \;\d t .
\end{align*}

Since $\mu = \delta_{u_0}$ for some  $u_0\in \mX$, then
\[
\frac{1}{RT} \int_0^T \int_\mX (\mathscr{P}_tf)(u)\, \d\mu(u)\;\d t
=  \frac{1}{RT} \int_0^T (\mathscr{P}_tf)(u_0)  \;\d t
= \frac{1}{RT} \int_0^T f(u(t))\;\d t .
\]

Therefore, if the temporal average \eqref{eq:bbd_avg} is bounded,
then, for any $\eps>0$, we can choose $R>\frac{1}{\eps}$ to conclude
$$
\nu_T(\mX\setminus K_R) \leq  \, \nu_T(\{f(u) > R\})<\eps.
$$
In this way, a compact set $K_R$ has been found such that $\nu_T(\mX\setminus K_R) \leq \eps$,
which implies that $\{\nu_T\}$ is tight.
\end{proof}

\medskip
This framework can be further refined.
An example of such an extension can be found in \cite{CGT2018}, in which the Feller property could not be proved
in the context of the one-dimensional stochastic Navier-Stokes equations.
Whilst the Feller condition is not available, the continuous dependence (without rates) can be shown.
By using the continuous dependence,
a class of functions, $\mathcal{G} \supseteq C(\mX)$, is defined
so that $\mathcal{G}$ is continuous on the elements of the solution space with finite energy,
though not necessarily the entire solution space.
With these, it has been shown in  \cite{CGT2018}
that $\mathscr{P}_t$ is invariant under $\mathcal{G}$.
Then the existence of invariant measures is proved in two steps:
First, an energy bound is employed to yield the tightness, so that the existence of a limiting measure is shown to exist;
then
the limiting measure is shown to be invariant (without invoking the Feller property)
by using the continuity condition imposed on $\mathcal{G}$ and
following the arguments as in the proof of Theorem \ref{thm:KrylovBogoliubov}.

\subsection{Approaches for the proof of the uniqueness of invariant measures}\label{sec:im_uniqueness}

It is well known that the invariant measures of a map form a convex set
in the probability space on $X$.
By the Krein-Milman theorem, a convex set is the closure of convex combinations
of its extreme points.
These extreme points $\mu$ happen to be ergodic measures,
which are characterized as the property that, for a measurable subset $A \subseteq \mX$,
\[
\mu((\mathcal{S}^{-1}A) \Delta A) = 0\,\,\, \Longleftrightarrow\, \,\,\mu(A) = 0 \mbox{ or } \mu(A) = 1,
\]
where $A\Delta B:=(A\setminus B)\cup (B\setminus A)$.

Ergodic measures heuristically carve up the solution space into essentially disjoint subsets,
since any two ergodic measures of a process either coincide or are singular with respect to one another.
This is a simple consequence of the property stated above.

It also follows from the extremal property of ergodic measures that, if there are more than one invariant measure,
then there are at least two ergodic measures.

\medskip
There are several approaches to establish the uniqueness of invariant measures.

\smallskip
{\bf The Strong Feller Property}: This is one of the common conditions used
to ensure the uniqueness.

\begin{defin}[Strong Feller property]\label{defin:strongfeller}
A Markov transition semigroup $\mathscr{P}_t$ is of the strong Feller property at time $t$
if $\mathscr{P}_t \varphi$ is continuous for every bounded
measurable $\varphi: \mX \to \mathbb{R}$.
\end{defin}

The strong Feller property guarantees the uniqueness of invariant
measures \cite{Doo1948,Kha1960}; see also \cite[Theorem 5.2.1]{DZ2008},
\cite{MS1998}, and the references cited therein.

The strong Feller property always holds for transition semigroups of processes
associated with nonlinear stochastic evolution equations
with Lipschitz nonlinear coefficients and nondegenerate diffusion ({\it e.g.} \cite{PZ1995}).

\medskip
{\bf The Coupling Method}: This method is a powerful tool in probability theory
introduced in  Doeblin-Fortet\cite{DF1937,Doe1940},
which can be used to show the uniqueness of invariant measures.

The general argument proceeds as follows:
Let $X_t$ be a Markov process with initial distribution $\mu_0$,
and let $Y_t$ be an independent copy of the process with an initial distribution
that is an invariant measure $\mu$.
Then the first meeting time $\mathcal{T}$ is a stopping time,
and the process defined by
\begin{align*}
Z_t = \begin{cases}
\, X_t \quad &\mbox{for $t < \mathcal{T}$},\\
\, Y_t \quad &\mbox{for $t \geq \mathcal{T}$}
\end{cases}
\end{align*}
is also a copy of $X_t$ by the strong Markov property.

Using the definition of $Z_t$, we can write
\begin{align*}
\mathscr{P}_t^* \mu_0 - \mu
=&\, (Z_t)_* \mathbb{P} - (Y_t)_*\mathbb{P}\\
=&\, (\1_{\{t <\mathcal{T}\}}Z_t)_*\mathbb{P}+(\1_{\{t \geq \mathcal{T}\}}Z_t)_*\mathbb{P}
  -(\1_{\{t < \mathcal{T}\}}Y_t)_*\mathbb{P} - (\1_{\{t \geq \mathcal{T}\}}Y_t)_*\mathbb{P}\\
=&\, (\1_{\{t < \mathcal{T}\}}Z_t)_* \mathbb{P} - (\1_{\{t < \mathcal{T}\}}Y_t)_*\mathbb{P}.
\end{align*}
Then the total variation norm of $\mathscr{P}_t^* \mu_0 - \mu$ can be estimates as
\begin{align*}
\left\|\mathscr{P}_t^* \mu_0 - \mu \right\|_{TV}
\leq  &\, \int (\1_{\{t < \mathcal{T}\}}Z_t)_* \d\mathbb{P}(u) +  \int (\1_{\{t < \mathcal{T}\}}Y_t)_* \d\mathbb{P}(u)\\
 = &\, \mathbb{P}(\{t < \mathcal{T}\}).
\end{align*}

Assume that $\mathcal{T}$ can be shown to be almost surely finite.
Then, as $t \to \infty$, we see that $\mathscr{P}_t^* \mu_0 \to \mu$, and there is only one invariant measure.

The coupling method has other applications in various different settings and can be implemented in qualitatively different ways;
see also \cite{Lin1992,Vil2009} and the references cited therein.

\smallskip
In our applications for the uniqueness of invariant measures for stochastic anisotropic parabolic-hyperbolic equations in \S 5,
$\mathcal{T}$ will be slightly modified to be the time of entry into a small ball.
Moreover, instead of the use of independent copies, we take two solutions starting at different initial data,
since our Markov processes are solutions of the equations with pathwise uniqueness properties.

First, we show in \S \ref{sec:uniquenessI} that the two solutions $u$ and $v$ enter a given ball in finite time, almost surely.
This is a stopping time.
From this, by the strong Markov property, we construct a sequence of increasing, almost surely finite stopping
times in (\ref{eq:stopping_time}),
which are spaced at least $T$ apart, for some $T>0$ later to be fixed.

Then we show in \S \ref{sec:uniquenessII} that, for a well-chosen $T >0$,
if a solution starts within the same given ball,
and the noise is uniformly small in $W^{1,\infty}_x$ over a duration of length $T$, then
the temporal average of $\|u(t)\|_{L^1_x}$ over that temporal interval can be taken to be smaller than some $\eps$.
Since the noise is $\sigma(x) W$, the uniform smallness in $W^{1,\infty}_x$ over an interval $[\mathcal{T}, \mathcal{T} + T]$
depends entirely on the size of $W$.

We see that, for $T >0$, the probability that the change in the noise remains small between $\mathcal{T}$ and $\mathcal{T} + T$
is strictly positive.
By the strong Markov property, we can replace $\mathcal{T}$ with any other stopping time
({\it e.g.} the one in the sequence constructed) spaced at least $T$ apart.
Using the $L^1$--contraction,
we show finally in \S \ref{sec:uniquenessIII} that the probability that the difference between the two solutions
remains large for all intervals $[\mathcal{T}, \mathcal{T} + T]$,
with $\mathcal{T}$ in the sequence of increasing stopping times, is bounded by the probability that
the noise is large in $W^{1,\infty}$ over all such sequences.
This must be vanishingly small, as the probability is strictly less than one on each individual sequence.

\section{Nonlinear Conservation Laws driven by Stochastic Forcing}

In this section, we discuss one strand of the recent developments in the analysis of long-time behaviors of
global solutions of nonlinear conservation laws driven by stochastic forcing.

\subsection{The stochastic Burgers equation}
The Burgers equation is the archetypal nonlinear transport equation in many ways.
The stochastic Burgers equation has also been used in turbulence and interface dynamics modelling;
see \cite{DDT1994,GN1999,LNP2000,HV2011} and the references cited therein.

The existence of a non-trivial invariant measure of the process associated
to the one-dimensional Burgers equation driven by an additive spatially
periodic white noise was first derived in Sinai \cite{Sin1991}.

The long-time behavior of global solutions of the Burgers equation in one spatial dimension driven by space-time white noise
has also been considered in the form:
\[
\pd_t u + \pd_x \big(\frac{u^2}{2}\big)=\pd_{xx}^2 u + \pd^2_{xt} \tilde{W},
\]
where $\tilde{W}:=\tilde{W}(x,t)$
is a zero-mean Gaussian process with a covariance function given by
\[
\Ex[\tilde{W}(x,t) \tilde{W}(y,s)] = (x\wedge y)(t \wedge s).
\]
Apart from the global well-posedness
in $L^2(\mathbb{R})$, it is known that an
invariant measure for the transition semigroup exists,
for example, via an argument of \cite{CF1994,Fla1994} by using the ergodic theorem \cite{DDT1994, GN1999}.
Similar techniques have also been applied to
study the two-dimensional Navier-Stokes equations driven by space-time white noises ({\it e.g.} \cite{DD2002}).

Attention in the development of the stochastic Burgers equation with vanishing viscosity
has also been turned to the question of additive (spatial) noise in an equation of the form:
\begin{equation}\label{3.1a}
\pd_t u + \pd_x \big(\frac{u^2}{2}\big)
= \sum_{k = 0}^\infty \pd_x F_k(x) \d W^k + \ep \pd^2_{xx} u.
\end{equation}
The existence of invariant measures for equation \eqref{3.1a} with $\ep=0$ is known ({\it e.g.} \cite{EKMS2000}).
One of the key points is that there is enough energy dissipation in the inviscid limiting solutions
as $\ep \to 0$ (satisfying the Lax entropy condition) so that such an invariant measure exists.

The argument for the existence proof of invariant measures in \cite{EKMS2000}
is not directly via the general methods discussed in \S 2 above.
Instead, the structure of the equation is exploited to form a variational problem in \cite{EKMS1997}.
The minimizers of the action functional
\begin{align*}
\mathcal{A}[y(t)] = \frac{1}{2}\int_{t_1}^{t_2} \dot{y}^2(s) \;\d s + \int_{t_1}^{t_2}\sum_k F_k(y(s))\,\d W^k(s)
\end{align*}
are the curves that satisfy Newton's equations for the characteristics.
These minimizers have an existence and uniqueness property with probability one.
Through this, a {\it one force-one solution} principle has been shown, in which
the random attractor consists of a single trajectory almost surely,
which in turn leads to the proof of the existence of an invariant measure for \eqref{3.1a}.

\subsection{Kinetic formulation}

The theory of kinetic formulation has been developed over the last three decades
({\it cf.} Perthame \cite{Per2002} and the references cited therein).
In particular, the compactness of entropy solutions of multidimensional scalar hyperbolic conservation laws
with a genuine nonlinearity was first established by Lions-Perthame-Tadmor in \cite{LPT1994a} via
combining the kinetic formulation with corresponding velocity averaging.
The velocity averaging is a technique whereby a genuine nonlinearity condition ({\it i.e.}
a non-degeneracy condition on the nonlinearity)
can be shown to imply the compactness (or even improved fractional regularity under a stronger condition)
of solutions via the kinetic formulation,
as seen in subsequent sections, especially in  condition \eqref{3.14a}.

We discuss the kinetic formulation in the context of scalar hyperbolic conservation laws here.

One of the inspirations for a kinetic formulation originated from the kinetic theory of gases.
One starts with a simple step function as the {\it kinetic function}:
\[
\chi^r(\xi):=\chi(\xi,r)
= \begin{cases}
  \,1 &\quad\mbox{for $0 < \xi < r$}, \\
  \,-1 &\quad\mbox{for $r < \xi< 0$},\\
  \, 0 &\quad \mbox{otherwise}.
\end{cases}
\]
Then, for any $\eta\in C^1$,
the following representation formula holds:
\begin{align}\label{eq:representation_kinetic}
\int_{\mathbb{R}} \eta'(\xi) \chi^u(\xi)\;\d \xi =\eta(u)-\eta(0).
\end{align}
A simple combination of kinetic functions yields
\begin{align}\label{eq:kinetic_combination1}
|u - v| =  \int \big(|\chi^u| + |\chi^v| - 2\chi^u\chi^v\big)\;\d \xi.
\end{align}
This provides an approach to the derivation of the $L^1$--contraction between two solutions, by estimating the terms on the right.

\smallskip
There are several variations on the form of the kinetic function.
Since $|u - v|=(u - v)_+ + (v-u)_+$, it suffices for a variation, or combinations, of
the kinetic function to capture $(u - v)_+$, which is simpler than (\ref{eq:kinetic_combination1}).
This can be done by considering the following kinetic function:
\[
\tilde{\chi}^u:=\tilde{\chi}(\xi,u) = 1 - H(\xi - u) = H(u - \xi),
\]
where $H = \mathbf{1}_{[0,\infty)}$ is the Heaviside step function.
We then have the representation formula:
\[
\eta(u) = \int_{\mathbb{R}} \eta'(\xi) \tilde{\chi}^u(\xi) \;\d \xi  \,\, \qquad \mbox{for $\eta \in C^1$ with $\eta(-\infty) = 0$}.
\]
In particular,
\begin{equation}\label{3.4a}
(u - v)_+ = \int \tilde{\chi}^u(\xi) \big(1 - \tilde{\chi}^v(\xi)\big)\;\d\xi.
\end{equation}
Such a kinetic function has been popularized by \cite{LPT1994a}
and has been used, inter alia, in \cite{ChenPang-1,DV2010,DHV2014,DV2015}, and even as far back as \cite{GM1989}.

The usefulness of the kinetic function
can be seen in the {\it kinetic formulation} of scalar conservation laws,
in which the kinetic variable takes the place of the solution
in the nonlinear coefficients so that a degree of linearity
is restored for analysis.
In this formulation, many powerful linear methods such as the Fourier transform
become not only applicable, but also natural.

\smallskip
Following Chen-Pang \cite{ChenPang-1},
we now derive the {\it kinetic formulation} of  nonlinear anisotropic
parabolic-hyperbolic equations of second order:
\begin{equation}\label{3.5a}
\pd_t u +\nabla \cdot F(u) = \nabla \cdot (\A(u)\nabla u) + \sigma(u, x) \pd_t W,
\end{equation}
where $F$ is a locally Lipschitz vector flux function of polynomial growth,
$\A$ is a positive semi-definite matrix function with continuous entries of polynomial growth,
and $\nabla=\nabla_x:=(\partial_{x_1}, \cdots, \partial_{x_d})$.

\smallskip
Consider the vanishing viscosity approximation to \eqref{3.5a}:
\[
\pd_t u^\ep + \nabla \cdot F(u^\ep)
= \nabla \cdot \big((\A(u^\ep) + \ep \mathrm{I})\nabla u^\ep\big) + \sigma(u^\ep,x) \pd_t W,
\]
where $I$ is the identity matrix.
Let $\eta\in C^1$ be an entropy with $\eta(0) = 0$.
Using the Ito formula,
we have
\begin{align*}
\pd_t \eta(u^\ep)
= & -\eta'(u^\ep) \nabla \cdot F(u^\ep) + \eta'(u^\ep) \sigma(u^\ep, x) \pd_t W+ \frac{1}{2} \eta''(u^\ep) \sigma^2(u^\ep, x)\\
&+ \nabla \cdot\big(\eta'(u^\ep) \A(u^\ep) \nabla u^\ep\big) -  \eta''(u^\ep) \A(u^\ep): \big(\nabla u^\ep \otimes \nabla u^\ep\big)\\
& + \ep \Delta \eta(u^\ep)  - \ep \eta''(u^\ep) |\nabla u^\ep|^2,
\end{align*}
where we have used the notation:
$\A : \B=\sum_{i,j} \mathbf{a}_{ij} \mathbf{b}_{ij}$
for  matrices $\A = (\mathbf{a}_{ij})$ and $\B = (\mathbf{b}_{ij})$ of the same size.

Applying the representation formula (\ref{eq:representation_kinetic}) yields
\begin{align*}
\pd_t \int \eta'(\xi) \chi^{u^\ep} \;\d \xi
= & -\nabla \cdot \Big(\int \eta'(\xi) F'(\xi) \chi^{u^\ep} \;\d \xi\Big)
 +  \langle \sigma(\cdot,x)\pd_t W(t)\delta(\cdot - u^\ep),\, \eta'(\cdot)\rangle \\
& +\nabla^2 : \Big(\int \eta'(\xi)\A(\xi) \chi^{u^\ep} \;\d \xi\Big)
   -\langle \A(\cdot) : (\nabla u^\ep \otimes \nabla u^\ep) \delta(\cdot - u^\ep),\, \eta''(\cdot)\rangle\\
&-\langle \ep |\nabla u^\ep|^2\delta(\cdot - u^\ep), \; \eta''(\cdot)\rangle
   + \frac{1}{2} \langle \sigma^2(\cdot, x) \delta(\cdot - u^\ep), \;  \eta''(\cdot)\rangle\\
&+ \ep \Delta \Big(\int \eta'(\xi) \chi^{u^\ep} \;\d \xi\Big).
\end{align*}

Assume that $u^\ep(x,t)\to u(x,t)$ {\it a.e.} almost surely as $\ep\to 0$.
Then, taking $\eta'(\xi)$ as  a test function and letting $\ep \to 0$, we arrive heuristically at the formulation:

\medskip
\begin{align}\label{eq:kinetic_formulation}
\pd_t \chi^u + F'(\xi) \cdot \nabla \chi^u = \A(\xi) : \nabla^2 \chi^u +  \sigma(\xi, x) \pd_t W(t)\delta(\xi - u)  + \pd_\xi( m^u + n^u - p^u),
\end{align}

\medskip
\noindent
which holds in the distributional sense, where $m^u$, $n^u$, and $p^u$ are Radon measures that are
the limits of the following measure sequences:
\begin{align*}
&\ep |\nabla u^\ep|^2 \delta(\xi - u^\ep) \rightharpoonup m^u,\\
&\A(\xi) : \big(\nabla u^\ep \otimes \nabla u^\ep\big) \delta(\xi -u^\ep) \rightharpoonup  n^u,\\
&\frac{1}{2} \sigma^2(\xi,x) \delta(\xi - u^\ep) \rightharpoonup  p^u.
\end{align*}
The Radon measure $m^u$ is the {\it kinetic dissipation measure}
and $n^u$ is the {\it parabolic defect measure},
which capture the dissipation from the vanishing viscosity terms and the degenerate parabolic terms, respectively.
In addition, the Radon measure
$$
p^u=\frac{1}{2}\sigma^2(\xi,x)\delta(\xi-u)
$$
arises from the It\^o correction.
As $\A$ is positive semi-definite, it is manifest that $m^u$, $n^u$, and $p^u$ are all non-negative.

More precisely, the parabolic defect measure $n^u\ge 0$ is determined by the following: For any $\varphi\in C_0(\R\times \R^d\times \R_+)$,
\begin{equation}\label{3.6a}
n^u(\varphi)=\int_{\R_+}\int_{\R^d}\varphi(u(x,t),x,t)\big|\nabla_x\cdot\big(\int_0^u\ab(\zeta)d\zeta\big)\big|^2\,\d x\, \d t .
\end{equation}
The kinetic dissipation measure $m^u\ge 0$ satisfies the following:
\begin{enumerate}
\item[\rm (i)] For $B_R^c\subset \R$ as the complement of the ball of radius $R$,
\begin{align}\label{3.6b}
\lim_{R\to\infty}\Ex\big[(m^u+n^u)(B^c_R\times \T^d \times [0,T])\big] = 0;
\end{align}
\item[\rm (ii)] For any $\varphi \in C_0(\mathbb{R}\times\R^d)$,
\begin{align}\label{3.6c}
\int_{\mathbb{R}\times \R^d \times [0,T]} \varphi(\xi, x)\, \d (m^u+n^u)(\omega; \xi, x, t ) \in L^2(\Omega)
\end{align}
admits a predictable representative (in the $L^2$--equivalence classes of functions).
\end{enumerate}

Then, following Chen-Pang \cite{ChenPang-1}, we introduce the notion of kinetic solutions:

\begin{defin}[Stochastic kinetic solutions]\label{def:ksolution}
A function
\[
u \in L^p(\Omega \times [0,T]; L^p(\R^d)) \cap L^p(\Omega; L^\infty([0,T]; L^p(\R^d)))
\]
is called a \emph{kinetic solution} of \eqref{3.5a} with initial data: $u|_{t=0}=u_0$, provided that
$u$ satisfies the following:

\smallskip
\begin{itemize}
\item[(i)]
$\nabla \cdot \big(\int_0^u \ab(\xi) \; d \xi\big) \in L^2(\Omega \times \R^d\times [0,T])$;

\smallskip
\item[(ii)] For any bounded $\varphi \in C(\mathbb{R})$, the Chen-Perthame chain rule relation in \cite{CP2003} holds:
\begin{align}\label{eq:art_chainrule}
\nabla \cdot \Big(\int_0^u \varphi(\xi) \ab(\xi) \; \d \xi\Big) = \varphi(u)\;\nabla \cdot \Big(\int_0^u \ab(\xi) \;\d \xi\Big)
\end{align}
in $\mathcal{D}'(\mathbb{T}^d)$ and almost everywhere in $(t,\omega)$.

\smallskip
\item[(iii)]
There is a kinetic measure $m^u\ge 0$ $\mathbb{P}$-{\it a.e.} such that, given the parabolic defect measure $n^u$,
the following holds almost surely{\rm :}  For any $\varphi \in C^\infty_c(\mathbb{R}, \R^d \times [0,T))$,
\begin{align}
&-\int_0^T \iint \chi(\xi, u) \; \pd_t \varphi\, \d \xi\,\d x\,\d t
  - \iint \chi(\xi, u_0)\,\varphi(\xi,x,0) \;\d \xi\,\d x\notag\\
&= \int_0^T \iint \chi(\xi, u)\,F'(\xi)\cdot \nabla \varphi  \;\d\xi\,\d x\,\d t
   + \int_0^T \iint \chi(\xi, u)   \A(\xi):\nabla^2\varphi \;\d \xi\,\d x\,\d t \notag\\
&\quad  + \int_0^T \iint \varphi_\xi \, \d (m^u+n^u)(\xi,x,t)
  -  \frac{1}{2} \int_0^T \int \varphi_u(u,x,t)\sigma^2(u,x)\;\d x\,\d t\notag\\
&\quad  - \int_0^T \int \varphi(u,x,t)\sigma(u,x) \;\d x\,\d W \qquad\,\,\mbox{almost surely}.
\label{eq:definsol}
\end{align}
\end{itemize}
\end{defin}

Equation \eqref{eq:definsol} is obtained by testing \eqref{eq:kinetic_formulation}
with $\varphi$ and using the chain rule (\ref{eq:art_chainrule}).

\smallskip
\subsection{General scalar hyperbolic conservation laws driven by stochastic forcing}

In Feng-Nualart \cite{FN2008}, the well-posedness was studied for the one-dimensional scalar conservation laws driven by white noise:
\begin{align*}
\pd_t u + \pd_x F(u) = \int_{z\in Z}\sigma(u,x;z)\,\d_z W(t, z),
\end{align*}
where $Z$ is a metric space, and $W$ is a space-time Gaussian noise martingale random measure with respect to a filtration $\{\mathscr{F}_t\}_{t \ge 0}$ satisfying
\[
\Ex\big[W(t,A) \cap W(t,B)\big] = \mu(A \cap B) t
\]
for measurable sets $A,B\subset Z$, with a $\sigma$-finite Borel measure $\mu$ on $Z$.
The well-posedness theory was developed around the notion
of {\it strong stochastic entropy solutions} introduced in Definition 2.6 in \cite{FN2008}
when $t\in [0,T)$ for any fixed $T\in (0, \infty)$.
In addition to the usual definition of entropy solutions,
the following further conditions on the solution, $u=u(x,t)$, for $t\in [0,T]$ are required:

For any smooth approximation function $\beta(u)$ of function $u_+$ on $\mathbb{R}$
and any $\varphi\in C^\infty(\mathbb{R}^d\times \mathbb{R}^d)$ with $\varphi \geq 0$,
and for any $\mathscr{F}_t$--adapted function $v$ satisfying
$\sup_{0 \leq t \leq T} \Ex[\|v\|_{L^p_x}^p] < \infty$,
there exists a deterministic function $\{A(s,t) : 0 \leq s \leq t\}$ such that the functional
\begin{align*}
f(r,z,u,y):= \int_{\mathbb{R}^d} \beta'(v(x,r) - u) \sigma(v(x,r),x;z)\varphi(x,y) \;\d x
\end{align*}
satisfies
\begin{align*}
&\Ex\big[\int \int_{(s,t]\times Z} f(r,z,u(y,t),y) \d W(r,z) \,\d y\big]\\
&\leq \Ex\big[\int_{(s,t]\times Z}  \int \pd_v f(r,z,v(y,r),y) \sigma(y,u(y,r);z) \;\d y\,\d r\d \mu(z)\big] + A(s,t),
\end{align*}
and that there is a sequence of partitions of $[0,T]$ so that
\[
\lim_{\max|t_{i + 1} - t_i| \to 0} \sum_{i = 1}^m A(t_i,t_{i + 1}) = 0.
\]

This notion of a solution addresses the problem that, in any direct adaptation of the deterministic notion of entropy solutions,
one encounters the question of adaptiveness of the It\^o integral in the noise-noise interaction.
With this notion, in \cite{FN2008},
the $L^1$--contraction and comparison estimates of strong stochastic entropy solutions in any spatial dimension
were established, while the existence of solutions is limited to the one-dimensional case
 based on the compensated compactness argument in Chen-Lu \cite{ChenLu}.

In Chen-Ding-Karlsen \cite{CDK2012}, the existence theory for strong stochastic entropy solutions
was established for any spatial dimension with the key observation that
the following $BV$ bound is a corollary from the $L^1$--contraction inequality:
\begin{align*}
\Ex\big[\|u(t)\|_{BV}\big] \leq \Ex\big[\|u_0\|_{BV}\big],
\end{align*}
which provides the strong compactness required for the existence theory in any spatial dimension.
More precisely, the following theorem holds:

\begin{thm}
Consider the Cauchy problem of the equation{\rm :}
\begin{equation}\label{3.8a}
\pd_t u + \nabla \cdot F(u) = \sigma(u) \pd_t W
\end{equation}
with initial condition{\rm :}
\begin{equation}\label{3.8b}
u|_{t=0}=u_0,
\end{equation}
satisfying
\begin{align*}
\Ex\big[\|u_0\|_{L^p}^p + \|u_0\|_{BV}\big] < \infty  \qquad \mbox{for $p>1$},
\end{align*}
where $F$ is a locally Lipschitz function of polynomial growth and $\sigma$ is a globally Lipschitz function.
Then there exists a unique strong stochastic entropy solution of the Cauchy problem \eqref{3.8a}--\eqref{3.8b}
satisfying
\[
\Ex\big[\|u(t)\|_{BV}\big] \leq \Ex\big[\|u_0\|_{BV}\big].
\]
\end{thm}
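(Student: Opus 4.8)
The plan is to follow the vanishing viscosity method, exploiting the $L^1$--contraction of strong stochastic entropy solutions to convert it into a uniform $BV$ estimate, and then using this $BV$ bound to extract a strongly convergent subsequence. First I would regularize: consider the parabolic approximation
\[
\pd_t u^\ep + \nabla\cdot F(u^\ep) = \ep\Delta u^\ep + \sigma(u^\ep)\pd_t W, \qquad u^\ep|_{t=0} = u_0^\ep,
\]
with $u_0^\ep$ a smooth mollification of $u_0$, and invoke the standard well-posedness theory for stochastic parabolic equations (Lipschitz $\sigma$, polynomial-growth $F$) to get a unique solution $u^\ep$ whose $L^p_x$ norm is controlled uniformly in $\ep$ by applying the It\^o formula to $|u^\ep|^p$ together with a Gronwall argument. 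One also checks that $u^\ep$ is itself a strong stochastic entropy solution of its own equation, so that the $L^1$--contraction of Feng--Nualart \cite{FN2008} is available at the viscous level.

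The key step is the uniform $BV$ bound. Since $F$ and $\sigma$ depend only on $u$, the equation is invariant under spatial translations; hence for any $h\in\R^d$ both $u^\ep(\cdot,t)$ and $u^\ep(\cdot+h,t)$ solve the same equation with the same noise, with initial data $u_0^\ep$ and $u_0^\ep(\cdot+h)$ respectively. The $L^1$--contraction then gives
\[
\Ex\big[\|u^\ep(\cdot+h,t)-u^\ep(\cdot,t)\|_{L^1_x}\big] \le \Ex\big[\|u_0^\ep(\cdot+h)-u_0^\ep\|_{L^1_x}\big] \le |h|\,\Ex\big[\|u_0\|_{BV}\big].
\]
Dividing by $|h|$ and taking the supremum over $h\neq 0$ yields $\Ex[\|u^\ep(t)\|_{BV}] \le \Ex[\|u_0\|_{BV}]$, uniformly in $\ep$ and $t$. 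Next I would upgrade this to space-time compactness: the uniform $BV$-in-space and $L^p$ bounds give, for each fixed $t$, compactness in $L^1_{\mathrm{loc}}$, and to obtain compactness in $t$ I would estimate the time increments $\|u^\ep(t+\tau)-u^\ep(t)\|$ in a negative Sobolev norm (the drift and viscous terms are bounded there by the $BV$/$L^p$ bounds, and the stochastic term is handled by an It\^o isometry / Burkholder estimate producing a modulus of continuity of order $\tau^{1/2}$ in expectation), then interpolate against the $BV$ bound. A stochastic compactness argument (Prohorov together with a Skorokhod representation, or a direct Aldous-type tightness criterion) then delivers a subsequence $u^\ep\to u$ converging a.e. in $(x,t)$, almost surely, with $u$ inheriting $\Ex[\|u(t)\|_{BV}]\le\Ex[\|u_0\|_{BV}]$ by lower semicontinuity and Fatou.

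Finally I would pass to the limit in the entropy formulation. The deterministic entropy--flux terms converge by the a.e.\ convergence together with the uniform $L^p$ bounds (Vitali), and the viscous term $\ep\Delta$ vanishes. The delicate point, and the main obstacle, is the passage to the limit in the stochastic and It\^o--correction terms so as to recover precisely the \emph{strong} stochastic entropy condition, that is, the inequality for the functional $f(r,z,u,y)$ with an error $A(s,t)$ negligible along a sequence of partitions: one must bound the noise--noise interaction term $\int\!\int \beta'(v(x,r)-u)\,\sigma(v(x,r),x;z)\,\varphi(x,y)\,\d x\,\d W(r,z)$ uniformly in $\ep$, using the adaptedness of $u^\ep$, the Lipschitz bound on $\sigma$, and the $BV$ modulus to control the quadratic-variation contributions, and then show these bounds are stable under the (only in-probability, or a.s.\ along a subsequence) convergence. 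Uniqueness is then immediate: it is exactly the $L^1$--contraction of strong stochastic entropy solutions of \cite{FN2008}, which in turn, once existence is established, reproduces the asserted $BV$ bound by the translation argument above.
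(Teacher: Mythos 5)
Your proposal follows essentially the same route as the paper: the paper explicitly identifies the key observation of Chen--Ding--Karlsen \cite{CDK2012} as being that the $BV$ bound is a corollary of the $L^1$--contraction (via translation invariance), and that this bound supplies the strong compactness needed to run the vanishing viscosity existence argument in any spatial dimension, with uniqueness coming from the Feng--Nualart contraction. Your outline, including the translation argument, the compactness step, and the passage to the limit in the strong stochastic entropy formulation, matches this approach.
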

This existence theory in $L^p \cap BV$ can also been extended to the second-order equations \eqref{3.5a}
as established in Chen-Pang \cite{ChenPang-1}, including the case with heterogeneous flux
functions $F=F(u,x)$ ({\it i.e.} the space-translational variant case).

A well-posedness theory can also be developed for kinetic solutions to the multidimensional scalar balance laws
with stochastic force \eqref{3.8a}, by employing the Gy\"ongy-Krylov framework
where the existence of a martingale solution with pathwise uniqueness guarantees the strong existence;
see \cite{DV2010}.
In particular, the existence of martingale solutions can be proved via the notion of kinetic solutions.
These results can be extended ({\it e.g.} \cite{Hof2013,DHV2014}) to encompass degenerate parabolic equations:
\[
\pd_t u + \nabla \cdot F(u) - \nabla \cdot (\A(u) \nabla u) = \sigma(u) \pd_t W.
\]

A well-posedness theory has also been established based on the viscosity solutions (such as
in \cite{BVW2012}).
To achieve this, the difficulties caused by the noise-noise interaction that has a non-zero correlation
for the multiplicative noise case
are avoided by directly comparing two entropy solutions to a viscosity solution.

In Karlsen-St\o rrensen \cite{KS2015}, these different viewpoints have been partially reconciled
via a Malliavin viewpoint,
in which the constant in the Kruzhkov entropy is interpreted
as a Malliavin differentiable variable.

\smallskip
Long-time asymptotic results concerning the existence and uniqueness of invariant measures
have followed the well-posedness theory.
Concerning the stochastic balance law:
\begin{align*}
\pd_t u + \nabla \cdot F(u) = \Phi(x) \;\d B,
\end{align*}
with evolution on torus $\mathbb{T}^d$,
where $B = \sum_k e_k W_k$ is a cylindrical Wiener process, $\{e_k\}$ is a complete orthonormal
basis of a Hilbert space, $\Phi$ is a Hilbert-Schmidt operator given by $\Phi(x) = \sum_k g_k(x) e_k$,
and $g_k(x)$ satisfies
\[
\int_{\mathbb{T}^d} g_k(x)  \;\d x = 0,
\]
the existence and uniqueness of invariant measures were shown in \cite{DV2015}.
In this case, the noise is {\it additive}; that is, it depends only on the spatial variable,
but is independent of the solution -- a point to which we will return.

These results can be summarized as follows:

\begin{thm}
Let $F$ satisfy the non-degeneracy condition{\rm :} For some $b  < 1$ and a constant $C >0$,
\begin{equation}\label{3.14a}
\delta(\ep) := \int_0^\infty e^{-t}\sup_{\tau\in \R, |\hat{k}|=1}
\mathcal{L}^1(\{\xi: |F'(\xi)\cdot \hat{k} + \tau| \le \ep t\}) \; \d t
\le C\ep^b
\end{equation}
for the Lebesgue measure $\mathcal{L}^1$ on $\mathbb{R}$,
in addition to the condition that $|F''(\xi)| \lesssim |\xi| + 1$.
Then there exits an invariant measure to the process.
Furthermore, if $|F''(\xi)|\lesssim 1$ is bounded,
then the invariant measure is unique.
\end{thm}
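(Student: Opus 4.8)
\textit{Proof proposal.}
The plan is to treat the problem by the Krylov--Bogoliubov route of \S\ref{sec:krylov_bogoliubov} for existence and by the $L^1$--contraction coupling of \S\ref{sec:im_uniqueness} for uniqueness. Take the state space to be $L^1(\T^d)$, and for $u_0\in L^2(\T^d)$ let $u(\cdot\,;u_0)$ be the unique kinetic solution of the balance law and $\mathscr{P}_t\varphi(u_0):=\Ex[\varphi(u(t;u_0))]$ the associated transition semigroup; the well-posedness of kinetic solutions and the $L^1$--contraction $\|u(t;u_0)-u(t;u_0')\|_{L^1}\le\|u_0-u_0'\|_{L^1}$ for solutions driven by a common noise path are both available from the theory recalled above. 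Since this contraction makes $u_0\mapsto u(t;u_0)$ a pathwise $1$--Lipschitz self-map of $L^1(\T^d)$, dominated convergence shows that $\mathscr{P}_t$ is Feller, so to apply Theorem~\ref{thm:KrylovBogoliubov} it only remains to prove that $\nu_T=\frac1T\int_0^T\mathscr{P}_t^*\delta_{u_0}\,\d t$ is tight.

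I first record the a priori estimates. Since $\int_{\T^d}g_k=0$, the spatial mean $a:=\int_{\T^d}u(x,t)\,\d x=\int_{\T^d}u_0\,\d x$ is conserved almost surely. Testing the kinetic formulation against the entropy $\eta(\xi)=\tfrac12\xi^2$ and integrating in $x$ yields the energy balance
\begin{equation*}
\Ex\|u(t)\|_{L^2}^2+2\,\Ex\big[m^u(\T^d\times\R\times[0,t])\big]\le\Ex\|u_0\|_{L^2}^2+t\sum_k\|g_k\|_{L^2(\T^d)}^2,
\end{equation*}
and hence the \emph{time-averaged} dissipation bound $\frac1T\Ex[m^u(\T^d\times\R\times[0,T])]\le\frac12\sum_k\|g_k\|_{L^2}^2+\frac{1}{2T}\Ex\|u_0\|_{L^2}^2$, which stays bounded as $T\to\infty$ (the linear-in-$t$ growth of $\Ex\|u(t)\|_{L^2}^2$ is harmless precisely because only this average enters below). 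Next, $\chi^u$ solves $\pd_t\chi^u+F'(\xi)\cdot\nabla\chi^u=\pd_\xi m^u+(\text{lower-order It\^o term})+\delta(\xi-u)\,\Phi(x)\,\d B$ in $\mathcal{D}'$, and under the non-degeneracy condition \eqref{3.14a} the velocity-averaging lemma (in its stochastic form, so as to absorb the noise source) converts this into an $x$--regularity gain for $u$. After truncating in $\xi$ at a level controlled by the $L^\infty_tL^2_x$ bound together with $|F''(\xi)|\lesssim|\xi|+1$, one obtains an exponent $s=s(b)>0$ and a constant $C$ independent of $T$ with $\frac1T\Ex\int_0^T\|u(t)\|_{W^{s,1}(\T^d)}\,\d t\le C$, because the averaging estimate controls $\Ex\,\|u\|_{L^1_t([0,T]);W^{s,1}_x}$ by the data plus $\Ex\,m^u([0,T])$ plus a noise contribution, both of which grow at most linearly in $T$. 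Since $W^{s,1}(\T^d)$ embeds compactly into $L^1(\T^d)$, the Markov inequality applied to $u\mapsto\|u\|_{W^{s,1}}$ gives $\nu_T(\{\|u\|_{W^{s,1}}>R\})\le C/R$ uniformly in $T$, which is the stochastic form of criterion (ii) of the tightness lemma above. Thus $\{\nu_T\}$ is tight, Theorem~\ref{thm:KrylovBogoliubov} furnishes an invariant measure, and since each $\mathscr{P}_t^*\delta_{u_0}$ is carried by the closed set $\{\int_{\T^d}u=a\}$, so is the limiting invariant measure.

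For uniqueness, strengthen the hypothesis to $|F''|\lesssim1$. Because the mean is conserved, measures supported on distinct level sets $\{\int_{\T^d}u=a\}$ are all invariant, so uniqueness is to be understood after fixing $a$; let then $\mu_1,\mu_2$ be invariant measures carried by $\{\int_{\T^d}u=a\}$. Since there is no smoothing, the semigroup is not strong Feller, so we argue by coupling rather than via Definition~\ref{defin:strongfeller}: run two kinetic solutions $u,v$ with data $u_0\sim\mu_1$, $v_0\sim\mu_2$ but the \emph{same} noise path. By the $L^1$--contraction, $t\mapsto\|u(t)-v(t)\|_{L^1}$ is nonincreasing, hence converges almost surely to some $\ell\ge0$; moreover, writing $\|u-v\|_{L^1}$ through the kinetic identity \eqref{eq:kinetic_combination1} and tracking the $\pd_\xi m^u,\pd_\xi m^v$ contributions, the contraction carries a nonnegative defect, so the cross entropy--dissipation between $u$ and $v$ has finite time integral in expectation. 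The boundedness $|F''|\lesssim1$ renders the averaging estimate of the previous paragraph \emph{uniform in the initial datum}, which gives compactness of the time shifts $\{(u(\cdot+t_n),v(\cdot+t_n))\}$; along $t_n\to\infty$ one extracts a limiting pair of solutions of a (random) conservation law with the same flux, the same mean $a$, $L^1$--distance $\ell$, and vanishing cross dissipation. A rigidity argument — vanishing cross entropy dissipation together with the genuine nonlinearity forced by \eqref{3.14a} compels the two limit profiles to coincide — then yields $\ell=0$. Hence $\Ex\|u(t)-v(t)\|_{L^1}\to0$, so the Wasserstein-$1$ distance between $\mathscr{P}_t^*\mu_1=\mu_1$ and $\mathscr{P}_t^*\mu_2=\mu_2$ vanishes, i.e.\ $\mu_1=\mu_2$.

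The main obstacle for existence is the stochastic velocity-averaging estimate in \emph{time-averaged} form: one must absorb the white-noise source $\delta(\xi-u)\Phi\,\d B$ and the measure term $\pd_\xi m^u$ into an $x$--regularity gain while keeping every constant independent of $T$, and the $\xi$--truncation must be paid for using only the growth permitted by $|F''|\lesssim|\xi|+1$. For uniqueness the crux — and the reason the stronger condition $|F''|\lesssim1$ is needed — is the closing rigidity step: turning ``zero cross entropy dissipation $+$ equal mean $+$ non-degenerate flux'' into ``equal profiles'' for the limiting problem, which is exactly where the initial-datum-uniform averaging regularization is indispensable.
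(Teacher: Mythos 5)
Your existence argument follows the same Krylov--Bogoliubov $+$ velocity-averaging route that the paper takes (in \S\ref{sec:dp_invariantm}, for the more general second-order equation, following \cite{DV2015}), and the skeleton --- conservation of the mean, $L^2$ energy/dissipation balance, time-averaged $W^{s,1}$ regularity, compact embedding into $L^1$, tightness of $\nu_T$ --- is right. But the step you flag as ``the main obstacle'' is precisely the content of the proof, and you leave it as a black box. One cannot simply invoke ``the velocity-averaging lemma in its stochastic form'': the source terms $\delta(\xi-u)\Phi\,\d B$ and $\pd_\xi m^u$ do not admit a useful space-time Fourier transform, so the classical division by $i(\tau+F'(\xi)\cdot k)$ is unavailable. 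The paper's (and \cite{DV2015}'s) resolution is to add the regularizing operators $\gamma(-\Delta)^\alpha+\theta I$ to both sides, pass to the Duhamel/mild formulation, split $u=u^0+u^\flat+M_1+M_2$, treat $u^0,u^\flat$ by an averaging computation adapted from Bouchut--Desvillettes, and control the measure term $M_1$ and the stochastic integral $M_2$ by the semigroup kernel estimate \eqref{eq:kernel_estimate} together with a total-variation bound on the kinetic measure. Without this device (or an equivalent one) the $W^{s,1}$ bound you assert is unproved, so the existence half is an outline rather than a proof.

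The uniqueness half takes a genuinely different route from the paper's, and here the gap is more serious. The paper (\S\ref{sec:uniquenessI}--\ref{sec:uniquenessIII}) argues probabilistically: solutions enter a fixed $L^1$ ball at a.s.\ finite stopping times \eqref{eq:stopping_time}; on the event that the noise increment stays small in $W^{1,\infty}$ over a window of length $T$ --- an event of probability $\lambda>0$ independent of the window by the strong Markov property --- the time-averaged $L^1$ norm becomes smaller than $\eps$; a Borel--Cantelli argument then forces $\|u^1(t)-u^2(t)\|_{L^1}\to 0$ a.s.\ via the $L^1$--contraction. You replace this with a LaSalle-type rigidity argument: extract a limit of time shifts and claim that vanishing cross entropy dissipation plus \eqref{3.14a} forces the two limit profiles to coincide. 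This closing step is not justified and does not obviously hold: the limiting pair still solves the \emph{forced} equation (the noise does not vanish along time shifts), and vanishing of the cross dissipation in the $L^1$--contraction identity only constrains the interleaving of $u$ and $v$ across shocks --- for instance, any \emph{ordered} pair $u\ge v$ has identically zero cross dissipation, so the dissipation alone cannot separate profiles; one must additionally exploit that the forcing drives the solution near its mean on quiet noise windows, which is exactly the probabilistic mechanism you have discarded. The compactness of time shifts you invoke is also not delivered by the time-averaged $L^1_tW^{s,1}_x$ bound (it gives tightness of occupation measures, not a.s.\ precompactness of trajectories), so both the extraction and the rigidity would need substantial new arguments. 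As written, the uniqueness claim is not established.
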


The bounds for the spaces on which the invariant measures are supported have also been derived.
This result has been obtained by employing the velocity averaging.
It has been built also on the related ideas of kinetic solutions,
which is first applied to the velocity averaging
in the deterministic context.
They avoided the question of the Fourier transforms of the Wiener process
by introducing regularizing operators.

Similar results were also derived for
\[
\pd_t u + \nabla \cdot (F(u) \circ \d W) = 0,
\]
by further employing the conservative form as considered in Lions-Perthame-Souganidis \cite{LPS2013,LPS2014};
see Gess-Souganidis \cite{GS2014}.
A generalization of this with a degenerate parabolic term $\nabla \cdot (\A(u)\nabla u)$ has
also been considered in \cite{GS2016,FG-2019}. In particular, Fehrman-Gess \cite{FG-2019} investigated the well-posedness and
continuous dependence of the stochastic degenerate parabolic equations of porous medium type,
including the cases with fast diffusion and heterogeneous fluxes.

By using the methods developed in \cite{FGRW2015,KS2012,HM2008,HM2011}
and developing the probabilistic Gronwall inequality based on delicate reasoning about a stopping time,
such MHD equations driven by additive noise of zero spatial average
in the vanishing Rossby number and vanishing magnetic Reynold's number limit were also shown to
have a unique invariant measure
(that is necessarily ergodic) in \cite{FFGR2016}.

\section{Stochastic Anisotropic Parabolic-Hyperbolic Equations \\ I: Existence of Invariant Measures} \label{sec:dp_invariantm}

In this section, we present an approach for establishing the existence of invariant measures for
nonlinear anisotropic parabolic-hyperbolic equations driven by stochastic forcing:
\begin{align}\label{eq:Main2}
\pd_t u + \nabla \cdot F(u) = \nabla \cdot(\A(u) \nabla u) + \sigma(x) \pd_t W,
\end{align}
where $\A$ is positive semi-definite, and $\sigma$ has zero average over $\mathbb{T}^d$.
The main focus of this section is on the presentation of the approach, so we do not seek the optimality of the results,
while the results presented below can be further improved by refining the arguments and technical estimates
required for the approach which is out of the scope of this section.
More precisely, we establish the following theorem:

\begin{thm}
Let $F$ and $\A$ satisfy
the nonlinearity-diffusivity condition{\rm :}
There exist $\beta\in (1,2)$,
$\kappa\in (0,1)$, and $C>0$, independent of $\lambda$, such that
\begin{align}\label{decayrate}
\sup_{\tau \in \R,\, |\hat{k}|=1}\int \frac{\lambda (\A(\xi): \hat{k}\otimes \hat{k}  + \lambda)}{(\A(\xi): \hat{k}\otimes \hat{k}  +\lambda)^2
    + \lambda^{\beta}|F'(\xi)\cdot \hat{k} + \tau|^2}  \;\d \xi
=:\eta(\lambda)\le C \lambda^\kappa \to 0
\end{align}
as $\lambda\to 0$.
In addition, let $F$ and $\A$ satisfy the condition{\rm :}
\begin{align}\label{eq:conditions_existence}
|F''(\xi)| \lesssim |\xi| + 1, \qquad
|\A'(\xi)| \lesssim |\xi| +1.
\end{align}
Then there exists an invariant measure to the process associated with the solutions
to \eqref{eq:Main2}.
\end{thm}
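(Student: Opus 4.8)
The plan is to apply the Krylov--Bogoliubov theorem (Theorem~\ref{thm:KrylovBogoliubov}) on the Polish phase space $\mX = L^1(\T^d)$, verifying the two ingredients it requires: the Feller property of the transition semigroup $\mathscr{P}_t$ associated with \eqref{eq:Main2}, and the tightness of the time averages $\nu_T = \frac1T\int_0^T\mathscr{P}_t^*\mu\,\d t$. Tightness will be obtained from the second sufficient condition stated in \S\ref{sec:krylov_bogoliubov} (that $\mathscr{P}_t$ maps into a Banach space compactly embedded in $\mX$, with uniformly bounded temporal averages), taking $\mu=\delta_{u_0}$ and $\mY = W^{s,1}(\T^d)$ for a suitably small $s\in(0,1)$; then $\mY\hookrightarrow\hookrightarrow\mX$ by the fractional Rellich--Kondrachov theorem. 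Note that the first tightness criterion (tightness of $\{\mathscr{P}_t^*\mu\}$ itself) is \emph{not} available here, because \eqref{eq:Main2} has no zeroth-order dissipation and the noise injects energy, so uniform-in-$t$ moment bounds cannot be expected; the time-averaging in Krylov--Bogoliubov is exactly what rescues the argument.

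First, I would set up the Markov semigroup. The well-posedness theory for kinetic solutions of \eqref{eq:Main2} (Definition~\ref{def:ksolution} and the $L^p\cap BV$ theory extended to the second-order equations) yields, for each $u_0\in L^1(\T^d)$, a unique solution, and the $L^1$-contraction $\Ex\|u(t;u_0)-u(t;v_0)\|_{L^1}\le\|u_0-v_0\|_{L^1}$ together with pathwise uniqueness shows that the laws of the solutions define a Markov semigroup $\mathscr{P}_t$ on $\mX$. The $L^1$-contraction also gives the Feller property on $C_b(\mX)$: if $v_0\to u_0$ in $L^1$, then $u(t;v_0)\to u(t;u_0)$ in probability in $L^1$, hence (along a subsequence) almost surely, so $\mathscr{P}_t\phi(v_0)=\Ex[\phi(u(t;v_0))]\to\mathscr{P}_t\phi(u_0)$ by dominated convergence for bounded continuous $\phi$. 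Since $\sigma$ has zero spatial average, the mean $\int_{\T^d}u(x,t)\,\d x$ is conserved by \eqref{eq:Main2} --- this is what makes the existence of a (non-drifting) invariant measure possible in the first place --- so it is harmless to fix $\mu=\delta_{u_0}$, say with $u_0\equiv 0$.

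Second, I would derive the a priori estimates. Testing the kinetic formulation \eqref{eq:kinetic_formulation} with the $L^2$-entropy (equivalently, applying It\^o's formula to $\|u\|_{L^2}^2$), the flux term drops by periodicity and one obtains an energy balance of the schematic form
\begin{equation*}
\Ex\big[\|u(T)\|_{L^2}^2\big] + c_1\,\Ex\big[(m^u+n^u)(\R\times\T^d\times[0,T])\big] = \|u_0\|_{L^2}^2 + c_2\|\sigma\|_{L^2}^2\,T,
\end{equation*}
which already furnishes the uniform-in-$T$ dissipation bound $\frac1T\Ex[(m^u+n^u)(\R\times\T^d\times[0,T])]\le C$ for $T\ge1$; the martingale and It\^o-correction contributions time-average similarly, and the polynomial-growth hypotheses \eqref{eq:conditions_existence} give the companion $L^p$-moment controls. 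The crux is then the velocity-averaging step: condition \eqref{decayrate} is the quantitative non-degeneracy that makes the anisotropic parabolic--hyperbolic symbol $i(\tau+F'(\xi)\cdot\hat k)+\A(\xi):\hat k\otimes\hat k$ invertible off a set of $\xi$ of measure $\eta(\lambda)\le C\lambda^\kappa$. Splitting the regularizing operator at scale $\lambda$, truncating $\chi^u$ in the velocity variable (to cope with $u\notin L^\infty$, the tail being absorbed by the $L^2$-moment uniformly in the truncation level), and balancing the contributions of $m^u$, $n^u$, the stochastic terms and the initial datum against $\eta(\lambda)$, one converts the above uniform-in-$T$ bounds into a uniform-in-$T$ fractional regularity estimate
\begin{equation*}
\frac1T\int_0^T\Ex\big[\|u(t)\|_{W^{s,1}(\T^d)}\big]\,\d t \le C\qquad\text{for all }T\ge1,
\end{equation*}
with $s\in(0,1)$ depending on $\kappa$ and $\beta$. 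This is precisely the hypothesis of the second tightness criterion of \S\ref{sec:krylov_bogoliubov} with $\mY=W^{s,1}(\T^d)\hookrightarrow\hookrightarrow L^1(\T^d)=\mX$; hence $\{\nu_T\}$ is tight, and Theorem~\ref{thm:KrylovBogoliubov} produces an invariant measure for $\mathscr{P}_t$.

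The main obstacle is the velocity-averaging step together with the bookkeeping needed to keep every constant uniform in $T$. Two difficulties compound there: the anisotropy of $\A$ means the second-order and first-order parts of the symbol carry different homogeneities in $\hat k$, so one cannot use the classical first-order averaging lemma directly --- this is exactly why \eqref{decayrate} is formulated with the auxiliary parameter $\lambda$ rather than the first-order condition \eqref{3.14a} --- and the velocity truncation errors must be controlled by the $L^p$-moments in a way that survives the division by $T$, which forces one to feed only the time-averaged (hence $T$-uniform) dissipation data into the averaging estimate. Morally, \eqref{decayrate} encodes the nonlinear self-similar decay of genuinely nonlinear degenerate parabolic--hyperbolic equations, which balances the energy pumped in by the noise; making this quantitative and compatible with the stochastic forcing is the heart of the proof. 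By contrast, the construction of $\mathscr{P}_t$, the Feller property, and the basic energy and moment estimates are routine consequences of the well-posedness theory quoted above.
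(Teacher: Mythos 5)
Your plan follows essentially the same route as the paper: Krylov--Bogoliubov on $L^1(\T^d)$ with tightness supplied by a $T$-uniform time-averaged fractional Sobolev bound, obtained from the kinetic formulation via velocity averaging under the nonlinearity--diffusivity condition \eqref{decayrate}, with the Feller property, the zero-mean role of $\sigma$, and the compact embedding $W^{s,q}\hookrightarrow L^1$ handled exactly as you describe. The paper's implementation of your ``velocity-averaging step'' is a Duhamel decomposition $u=u^0+u^\flat+M_1+M_2$ for the semigroup of the operator regularized by $\gamma(-\Delta)^\alpha+\theta I$, in which the kinetic and parabolic-defect measures and the martingale forcing $\sigma(x)\delta(\xi-u)\,\pd_t W$ (which admits no space-time Fourier treatment) are controlled by kernel estimates on that semigroup and absorbed via small constants $\epsilon_0$ --- the one point your outline leaves implicit, but the same underlying idea.
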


The approach is motivated by Debussche-Vovelle \cite{DV2015} by extending
the case from first-order scalar balance laws to the second-order
degenerate parabolic-hyperbolic equation \eqref{eq:Main2}.
The first-order case is handled in \cite{DV2015}, based on the velocity averaging
and built on Lemma 2.4 of Bouchut-Desvillettes \cite{BD1999}.
In our approach, we require a modified version of this lemma,
which is incorporated into the calculation
that allows us to exploit the cancellations in an oscillatory integral in this more general case than
the first-order case.
We now proceed to prove the theorem as follows:
\begin{itemize}
\item[(i)] First we incorporate regularizing operators into the equation in order to exploit the bounds
   that can be provided in the Duhamel representation of the solution.
\item[(ii)] We separate the Duhamel representation of the solution into four different summands,
   the $W^{s,q}$ norms of which we estimate.
\item[(iii)] Adding these estimates together by the triangle inequality and using the compact inclusion of $W^{s,q}$ into a suitable $L^q$ norm
allow us to invoke the Krylov-Bogoliubov machinery described in \S \ref{sec:krylov_bogoliubov}.
\end{itemize}

We expound on the nonlinearity condition \eqref{decayrate} in a remark below.
As the conditions in (\ref{eq:conditions_existence}) are invoked along the way, we also explain their relevance.

\medskip
Consider the kinetic formulation of equation (\ref{eq:Main2}):
\begin{align}\label{4.2a}
\pd_t \chi^u +\big(F'(\xi) \cdot \nabla  - \A(\xi) :\nabla \otimes \nabla \big)\chi^u
= \pd_\xi(m^u + n^u - p^u) + \sigma(x) \delta(\xi - u) \pd_t W.
\end{align}

In order to handle the two measures: $m^u + n^u - p^u$ and $\sigma(x) \delta(\xi - u)$,
we need to regularize the operators as in \cite{DV2015},
by adding $\gamma (-\Delta)^\alpha + \theta \, I$ to each side:
\begin{align}
&\pd_t \chi^u +\big(F'(\xi) \cdot \nabla  - \A(\xi) :\nabla \otimes \nabla + \gamma(-\Delta)^\alpha  + \theta\,I\big)\chi^u \nonumber\\
&= \big(\gamma(-\Delta)^\alpha + \theta I\big)\chi^u + \pd_\xi(m^u + n^u - p^u) + \sigma(x) \delta(\xi - u) \pd_t W \label{4.3a}
\end{align}
for $\alpha=\frac{\beta-1}{\beta}\in (0, \frac{1}{2})$ for some $\beta\in (1,2)$ required in the nonlinearity-diffusivity condition
\eqref {decayrate}.

We adapt the semigroup approach.
There are specific reasons to include these regularizing operators: In order to estimate the measure, $\sigma(x) \delta(\xi - u)\pd_t W$,
we require a spatial regularization provided by $(-\Delta)^\alpha$ and temporal decay provided by $\theta I$.

More specifically, let $\mathcal{S}(t)$ be the semigroup of operator
$\pd_t +\big(F'(\xi) \cdot \nabla  - \A(\xi) :\nabla \otimes \nabla + \gamma(-\Delta)^\alpha  + \theta\,I\big)$:
\begin{align}
\se(t) f(x)=&\, e^{-(F'(\xi) \cdot \nabla -\A(\xi): \nabla \otimes \nabla + \gamma(-\Delta)^\alpha + \theta I)t} f\nonumber\\
= &\, e^{-\theta t} \big(e^{t\A(\xi) : \nabla \otimes \nabla -t\gamma(-\Delta)^{\alpha}} f\big) (x - F'(\xi) t)
\qquad\,\,\mbox{for any $f=f(x)$}. \label{4.4a}
\end{align}
Then we can express the solution, $\chi^u$, to the kinetic formulation in the mild formulation:
\begin{align*}
\chi^u =& \, \se(t) \chi^u(\xi,x,0) + \int_0^t \se(s) (\gamma(-\Delta)^\alpha - \theta I) \chi^u(\xi, x, t - s) \;\d s\\
&+\int_0^t \se(t - s) \pd_\xi(m^u + n^u - p^u) (\xi,x,s)\;\d s + \int_0^t \se(t - s) \sigma(x) \delta(\xi - u(x,s)) \;\d W_s.
\end{align*}
This leads to the decomposition:
\begin{equation}\label{4.6a}
u = u^0 + u^\flat + M_1 + M_2,
\end{equation}
where
\begin{align}
u^0(x,t) = & \int \se(t)\chi^u(\xi,x,0) \;\d \xi , \label{4.7a}\\
u^\flat(x,t) = &\int \int_0^t \se(s) (\gamma(-\Delta)^\alpha - \theta I)\chi^u(\xi,x,t - s)\;\d s\,\d \xi ,\label{4.7b}\\
\LL M_1, \varphi \RR = &  \int_0^t\int \LL \pd_\xi(m^u + n^u - p^u)(\cdot,x,t - s),\, \se^*(s)\varphi\RR\, \d x\,\d s,\label{4.7c}\\
\LL M_2, \varphi \RR = & \int_0^t\int \LL \delta(\cdot - u(x,s)),\, \se^*(t - s)\varphi\RR \sigma(x)  \;\d x \;\d W_s,\label{4.7d}
\end{align}
where $\se^*(t)$ is the dual operator of the semigroup operator $\se(t)$, and $\varphi\in C(\mathbb{T}^d)$.

We now estimate each of the four terms separately in each subsection:
The first two integrals are essentially ``deterministic'' parts
and estimated by the velocity averaging method,
and the final two integrals incorporate stochastic elements and
are treated by a kernel estimate on semigroup $\se(t)$.

\subsection{Analysis of $u^0$}
Notice that the local Fourier transform in $x\in \mathbb{T}^d$ for any periodic function $g(x,\cdot)$ in $x$  with
period $P=(P_1,\cdots, P_d)$ is:
$$
\hat{g}(k, \cdot )=
\frac{1}{|\mathbb{T}^d|}
\int_{\mathbb{T}^d} g(x,\cdot)e^{-i x\cdot k}\,\d x,
$$
where frequencies $k=(k_1,\cdots, k_d)$ are
discrete:
$$
k_i=\frac{2\pi}{P_i}n_i, \qquad n_i=0, \pm 1, \pm 2, \cdots, \,\,\, i=1,\cdots, d.
$$

Taking the Fourier transform in $x$ and integrating in $\xi$, we have
\begin{align*}
\widehat{u^0}(k,t) = &\int \hat{\se}(t) \widehat{\chi^u}(\xi,k,0) \;\d \xi \\
= & \int e^{-(iF'(\xi) \cdot k  + \A(\xi) : (k \otimes k)+ \omega_k |k|)t}\, \widehat{\chi^u} (\xi,k,0)\;\d \xi,
\end{align*}
where $\omega_k=\gamma|k|^{2\alpha-1}+\theta |k|^{-1}$.

For simplicity, we denote $\hat{k}=\frac{k}{|k|}$ and
$\mathcal{A}= \mathcal{A}(\xi,\hat{k})=\A(\xi) : \hat{k} \otimes\hat{k}$.
Then we square the above and integrate in $t$ from $0$ to $T$ to obtain
\begin{align}
\int_0^T |\widehat{u^0}(k,t)|^2 \;\d t
= & \int_0^T \left| \int e^{-(iF'(\xi) \cdot \hat{k}+ \mathcal{A}(\xi,\hat{k})|k|+\omega_k) |k| t}\, \widehat{\chi^u} (\xi,k,0)\;\d \xi \right|^2 \;\d t\notag\\
\leq &\, \frac{1}{|k|} \int \left| \int \mathds{1}_{\{s > 0\}}
 e^{-(iF'(\xi) \cdot\hat{k}+\mathcal{A}(\xi,\hat{k})|k|+\omega_k)s}\,\widehat{\chi^u} (\xi,k,0)\;\d \xi \right|^2 \;\d s.\label{eq:model_estimate1}
\end{align}

Notice that it is impossible to extract the entire non-oscillatory part of the exponential from
the integral in $\xi$, as was done with the lemma of Bouchut-Desvillettes \cite{BD1999}.
However, by extending the range of integration over all $\mathbb{R}$ to make the function in $s$ smoother
so that its transform has better decay properties, we can partially exploit the cancellations later:
\begin{align}
\int_0^T |\widehat{u^0}(k,t)|^2 \;\d t
\leq &\frac{1}{|k|}\int_{-\infty}^\infty
  \left|\int e^{iF'(\xi) \cdot \hat{k} s} e^{-(\omega_k +\mathcal{A}|k|)|s|} \widehat{\chi^u} (\xi,k,0)\;\d \xi  \right|^2 \;\d s.
  \end{align}

We can evaluate the temporal Fourier transform of the integrand explicitly:
\begin{align*}
\mathscr{F}^{-1}\left\{ e^{iF'(\xi) \cdot \hat{k} s} e^{-(\omega_k + \mathcal{A}|k|)|s|}  \right\}(\tau)
= - \frac{2(\mathcal{A}|k|+\omega_k)}{(\mathcal{A}|k|+\omega_k)^2
     + |F'(\xi)\cdot \hat{k} + \tau|^2}.
\end{align*}

Next, using the Parseval identity in the temporal variable and the Cauchy-Schwarz inequality,
we have
\begin{align*}
\int_0^T \big|\widehat{u^0}(k,t)\big|^2 \;\d t
\leq  &\, \frac{1}{|k|}\int_{-\infty}^\infty
  \left|\int e^{iF'(\xi) \cdot \hat{k} s} e^{-(\omega_k + \mathcal{A}|k|)|s|} \widehat{\chi^u} (\xi,k,0)\;\d \xi \right|^2 \;\d s\\
=  &\, \frac{1}{|k|}\int_{-\infty}^\infty
  \left|\mathcal{F}^{-1}
   \left\{\int e^{iF'(\xi) \cdot \hat{k} s} e^{-(\omega_k + \mathcal{A}|k|)|s|} \widehat{\chi^u} (\xi,k,0)\;\d \xi\right\}(\tau)\right|^2 \;\d \tau\\
=  &\, \frac{4}{|k|}\int_{-\infty}^\infty
    \left|\int \frac{\mathcal{A}|k|+\omega_k}{(\mathcal{A}|k|+\omega_k)^2
       + |F'(\xi)\cdot \hat{k} + \tau|^2} \widehat{\chi^u} (\xi,k,0)\;\d \xi  \right|^2 \;\d \tau\\
\le &\, \frac{4}{|k|}\int_{-\infty}^\infty
  \left( \int \big|\widehat{\chi^u}(\xi,k,0)\big|^2\frac{\mathcal{A}|k|+\omega_k}{(\mathcal{A}|k|+\omega_k)^2
    + |F'(\xi)\cdot \hat{k} + \tau|^2}\;\d \xi  \right)\,\\
    &\qquad\quad\quad\times\left(\int \frac{\mathcal{A}|k|+\omega_k}{(\mathcal{A}|k|+\omega_k)^2
    + |F'(\xi)\cdot \hat{k} + \tau|^2}  \;\d \xi\right)\d \tau\\
\le &\, \frac{4}{|k|\omega_k}
\int \big|\widehat{\chi^u}(\xi,k,0)\big|^2\left(\int \frac{\mathcal{A}|k|+\omega_k}{(\mathcal{A}|k|+\omega_k)^2
    + |F'(\xi)\cdot \hat{k} + \tau|^2} \;\d \tau\right) \d \xi\\
    &\qquad \times \sup_\tau\int \frac{\omega_k(\mathcal{A}|k| + \omega_k)}{(\mathcal{A}|k| + \omega_k)^2
    + |F'(\xi)\cdot \hat{k} + \tau|^2}  \;\d \xi.
\end{align*}

Notice that the integral
$$
\int \frac{\mathcal{A}|k|+\omega_k}{(\mathcal{A}|k|+\omega_k)^2
    + |F'(\xi)\cdot \hat{k} + \tau|^2} \;\d \tau
$$
is a constant for fixed $\xi$ by the translation invariance of $\d \tau$.

Now invoking \eqref{decayrate} and setting $\lambda = \frac{\omega_k}{|k|}$,
we have
$$
\int_0^T  |\widehat{u^0}(k,t)|^2\;\d t
\le \frac{C}{|k|\omega_k}\eta(\frac{\omega_k}{|k|})
\int |\widehat{\chi^u}(\xi,k,0)|^2\;\d \xi
$$
for some constant $C$ depending on $\gamma$ and $\theta$. That is,
$$
\int_0^T |k|^{1+\kappa}\omega_k^{1-\kappa} |\widehat{u^0}(k,t)|^2\;\d t
\le
C \int |\widehat{\chi^u}(\xi,k,0)|^2\;\d \xi.
$$

Since $u_0$ has null average over $\mathbb{T}^d$,
\begin{equation}\label{4.15a}
\widehat{u^0}(0,t)=\int_{\mathbb{T}^d}u^0(x,t)\, \d x
=\int\widehat{\chi^u}(\xi, 0,0)\, \d\xi
=\iint \chi^u(\xi, x,0)\,\d \xi\,\d x
=\int_{\mathbb{T}^d} u_0(x)\, \d x=0.
\end{equation}

Then, summing over all the discrete frequencies $k$ with $|k|\ne 0$,
using the Plancherel theorem again --- in space this time --- and
noting that $\omega_k\ge \gamma|k|^{2\alpha-1}$,
we have the estimate
\begin{align}\label{eq:u_0_estimate}
\int_0^T\|u\|_{H^{(1 - \alpha) \kappa + \alpha}_x}^2 \;\d t \le C\|u_0\|_{L^{1}_x}.
\end{align}

\medskip
\subsection{Analysis of $u^\flat$}\label{sec:uflat}

The calculation is similar:
\begin{align*}
&\int_0^T |\widehat{u^\flat}(k,t)|^2\;\d t  \\
&=\int_0^T \left|\int \int_0^t \hat{\se}(s) (\gamma|k|^{2\alpha} + \theta I)\,\widehat{\chi^u}(\xi,k,t - s)\;\d s\,\d \xi \right|^2 \d t \\
&= \int_0^T \left| \int_0^T \1_{\{t - s \geq 0\}} \int e^{-(iF'(\xi) \cdot k + \omega_k|k| + \A(\xi) : k\otimes k) s}
    \omega_k |k|\,\widehat{\chi^u}(\xi,k,t - s)\;\d \xi \, \d s\right|^2 \;\d t \\
&\leq  \Big(\int_0^\infty \omega_k|k| e^{-\omega_k |k| s}\;\d s\Big)\\
&\quad\,\,\,\times \int_0^T\Big(\int_0^T \Big|\int e^{-(iF'(\xi) \cdot k + \omega_k|k|/2 + \A(\xi) : k\otimes k) s}
 \sqrt{\omega_k |k|}\, \widehat{\chi^u}(\xi,k, t)\,\d \xi\Big|^2\d s \Big)\d t,
\end{align*}
where we have used the Cauchy-Schwarz inequality and extended the domain of the inner temporal integration to $[0,\infty)$.

This leaves us in the exact position of Eq. \eqref{eq:model_estimate1}
with an additional temporal integral in $t$  (applied only to the kinetic function $\widehat{\chi^u}$) and an additional factor of $|k|\omega_k$.
Therefore, we can conclude as in Eq. \eqref{eq:u_0_estimate}
by using the zero-spatial average property \eqref{4.15a} and  $\omega_k\le (\gamma+\theta)|k|^{2\alpha-1}$
 that
\begin{align}\label{eq:ubestimate}
\int_0^T\|u^\flat(t)\|_{H^{(1 - \alpha)\kappa}_x}^2 \;\d t\leq &C \int_0^T \|u(t)\|_{L^{1}_x}\;\d t.
\end{align}

\begin{rem}\label{rem:nondegeneracy}
Condition \eqref{decayrate} is reminiscent of the nonlinearity condition given in the deterministic setting by Chen-Perthame \cite{CP2009}.
If we discard the regularising operator $(-\Delta)^{2\alpha}$ in \eqref{4.3a}, {\it i.e.}
by setting $\alpha = 0$, then $\beta=1$
in \eqref{decayrate}.
On the other hand, we can choose $\beta$ sufficiently close to $2$ so that \eqref{decayrate} holds,
by selecting $\alpha$ close to $\frac{1}{2}$.
For both cases,
we are  able to conclude that the $u^\flat$-part of the solution operator is compact.
However, as we will see below, the regularizing effect of $(-\Delta)^{2\alpha}$ is crucial in
estimating \eqref{4.7c}--\eqref{4.7d} in the way as we do, via \eqref{eq:kernel_estimate},
in the next subsections.
As the two terms \eqref{4.7c} and \eqref{4.7d} arise from the martingale and the It\^{o} approximation, respectively,
this decay requirement beyond $o(1)$ does not appear in the deterministic setting.
\end{rem}

\smallskip
\subsection{Analysis of $M_1$}

Next we turn to the analysis of the two measures $M_1$ and $M_2$.
For this, we follow \cite{DV2015} closely,
since the only difference is the parabolic defect measure,
which has the same sign as the kinetic dissipation measure,
and the magnitude of the kinetic dissipation measure
is never invoked in \cite{DV2015}.
In this and the following sections,
we repeatedly apply bound (\ref{eq:kernel_estimate}) in order
to pursue the compactness estimates.

From \eqref{4.4a},
we see
\begin{align}\label{eq:pd_xi_se}
&\pd_\xi\big(\se^*(t -s) h(\xi,x)\big) \notag\\
&= (t - s) F''(\xi) \cdot \nabla (\se^*(t - s)h) + \A'(\xi) : \nabla^2 (\se^*(t - s) h)
   +\se^*(t - s) \pd_\xi h.
\end{align}
Then we have
\begin{align}
\LL M_1, \varphi \RR
&=   -\int_0^t\iint  \pd_\xi(\se^*(s)\varphi)\; \d (m^u + n^u - p^u)(\xi,x,t - s)
\notag\\
&= \int_0^t\iint (t - s)F''(\xi) \cdot \nabla (\se^*(t - s)\varphi)\;\d (m^u + n^u - p^u)(\xi,x,t - s)
\notag\\
&\quad + \int_0^t\iint \A'(\xi) : \nabla^2 (\se^*(t - s) \varphi)\; \d (m^u + n^u - p^u)(\xi,x,t - s).
 \label{eq:M1_expanded}
\end{align}

Now we show the following total variation estimate.

\begin{lma}\label{thm:TVestimate1}
Let $u: \mathbb{T}^d \times [0,T] \times \Omega$ be a solution with initial data $u_0$.
Let $\psi \in C_c(\mathbb{R})$ be any nonnegative and compactly supported continuous function,
and $\Psi = \int_0^s \int_0^r \psi(t)\;\d t \d r$.
Then
\begin{align*}
\Ex\big[\int_{\mathbb{T}^d \times [0,T] \times \mathbb{R}} \psi(\xi)\;\d|m^u + n^u - p^u|(\xi,x,t)\big]
 \leq  D_0 \Ex\big[\|\psi(u)\|_{L^1_{x,t}}\big] + \Ex\big[\|\Psi(u_0)\|_{L^1_x}\big],
\end{align*}
where $D_0:=\|\sigma^2\|_{L^\infty(\mathbb{T})}$.
\end{lma}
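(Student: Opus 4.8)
The plan is to obtain the bound by testing the kinetic formulation \eqref{eq:definsol} with a carefully chosen test function that isolates the measure term $\d(m^u+n^u-p^u)$. Concretely, I would take $\varphi(\xi,x,t) = \Psi(\xi)$ independent of $(x,t)$, where $\Psi = \int_0^s\int_0^r \psi(t)\,\d t\,\d r$ so that $\varphi_\xi = \int_0^\xi \psi(t)\,\d t$ and $\varphi_{\xi\xi} = \psi$. (One first runs the argument with $\varphi$ also truncated smoothly in $t$ near $t=T$, then removes the truncation; since $\psi$ has compact support in $\xi$ and $u\in L^p$, all terms are finite.) Because $\varphi$ does not depend on $x$, the flux term $\iint \chi(\xi,u)F'(\xi)\cdot\nabla\varphi$ and the second-order term $\iint \chi(\xi,u)\,\A(\xi):\nabla^2\varphi$ vanish identically, and the stochastic integral $-\int_0^T\!\int \varphi(u,x,t)\sigma(u,x)\,\d x\,\d W$ reduces to $-\int_0^T\!\int \Psi(u)\sigma(x)\,\d x\,\d W$.

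Next I would take expectations. The stochastic integral has zero expectation (after checking the integrand is in $L^2(\Omega\times[0,T])$, which follows from the $L^p$ bound on $u$, boundedness of $\Psi$ on the relevant range, and boundedness of $\sigma$). The It\^o correction term contributes $\frac{1}{2}\Ex\big[\int_0^T\!\int \psi(u)\sigma^2(x)\,\d x\,\d t\big] \le \frac{1}{2}D_0\,\Ex\big[\|\psi(u)\|_{L^1_{x,t}}\big]$ with $D_0 = \|\sigma^2\|_{L^\infty(\T)}$; note that since $\A$ is only $\sigma$-independent here and $\sigma=\sigma(x)$, the correction is exactly of this form. The initial-data term gives $-\iint \chi(\xi,u_0)\varphi(\xi,x,0)\,\d\xi\,\d x = -\int_{\T^d}\Psi(u_0)\,\d x$ after using the representation formula \eqref{eq:representation_kinetic} with $\eta = \Psi$ (and $\Psi(0)=0$), whose expectation is bounded by $\Ex[\|\Psi(u_0)\|_{L^1_x}]$. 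Rearranging \eqref{eq:definsol}, this yields
\begin{align*}
\Ex\Big[\int_0^T\!\!\iint \varphi_\xi\,\d(m^u+n^u-p^u)\Big] \le \tfrac{1}{2}D_0\,\Ex\big[\|\psi(u)\|_{L^1_{x,t}}\big] + \Ex\big[\|\Psi(u_0)\|_{L^1_x}\big].
\end{align*}
The left side is not yet the total-variation quantity I want, since $\varphi_\xi$ is not $\psi$; so the final step is to recover $\int \psi\,\d|m^u+n^u-p^u|$.

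To pass from the bound on $\int \varphi_\xi\,\d(m^u+n^u-p^u)$ to the desired total-variation bound on $\int\psi\,\d|m^u+n^u-p^u|$, I would exploit the sign structure: $m^u$ and $n^u$ are nonnegative Radon measures and $p^u = \frac12\sigma^2(\xi,x)\delta(\xi-u)$ is also nonnegative, and moreover $p^u$ has the explicit density that makes $\int\psi\,\d p^u = \frac12\int_0^T\!\int\psi(u)\sigma^2(x)\,\d x\,\d t$ directly controlled by $D_0\Ex[\|\psi(u)\|_{L^1_{x,t}}]$. Writing $|m^u+n^u-p^u| \le m^u+n^u+p^u$, it suffices to bound $\Ex[(m^u+n^u)(\psi)]$; choosing a smooth $\psi$ one can use a standard trick of replacing $\varphi_\xi$ by a function that dominates $\psi$ on the support — e.g. take the test function generated by $\psi$ itself via double integration, noting $\varphi_\xi(\xi) = \int_0^\xi\psi \ge 0$ increases to a constant, and then on the support of $m^u+n^u$ (bounded in $\xi$ by the $L^\infty_t L^p_x$ bound on $u$ and the structure of $n^u$) one has $\varphi_\xi \gtrsim \psi$ after rescaling, absorbing constants into $D_0$ and the initial term. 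The cleanest route is in fact to choose $\psi$ first and \emph{define} $\Psi$ as in the statement so that the produced test function's $\xi$-derivative is exactly the antiderivative of $\psi$; then a density/approximation argument over nonnegative $\psi$ plus the nonnegativity of all three measures closes the estimate with the stated constants.

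The main obstacle I anticipate is the justification of using a test function that is merely $C^1$ (or Lipschitz) in $\xi$ rather than $C^\infty_c$, and not compactly supported in $t$ up to $t=T$: one must approximate, pass to the limit in each term of \eqref{eq:definsol}, and control the measure term $\iint \varphi_\xi\,\d(m^u+n^u)$ near $t=T$ and near $\xi=\pm\infty$ using precisely the decay properties \eqref{3.6b}–\eqref{3.6c} of $m^u+n^u$ recorded above — that is where the predictability and the $L^2(\Omega)$ integrability are used to legitimize taking expectations and exchanging limits. Everything else is bookkeeping once the flux and diffusion terms are observed to drop out by $x$-independence of the test function.
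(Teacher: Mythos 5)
Your overall strategy coincides with the paper's: test the kinetic formulation \eqref{eq:definsol} with an $x$-independent test function built from $\psi$ so that the flux and diffusion terms drop out, take expectations so the stochastic integral vanishes, write $|m^u+n^u-p^u|\leq (m^u+n^u-p^u)+2p^u$, and bound the explicit measure $p^u=\frac12\sigma^2(x)\delta(\xi-u)$ by $D_0\,\Ex\big[\|\psi(u)\|_{L^1_{x,t}}\big]$. The approximation issues you flag (test functions not compactly supported in $\xi$ or up to $t=T$, handled via \eqref{3.6b}--\eqref{3.6c}) are exactly the right ones.

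There is, however, a genuine error in your choice of test function, and the patch you propose for it does not work. You take $\varphi=\Psi$, the \emph{double} antiderivative, so that in \eqref{eq:definsol} the measure $m^u+n^u$ is paired with $\varphi_\xi(\xi)=\int_0^\xi\psi(t)\,\d t$ rather than with $\psi$ itself; note also that with this choice the It\^o correction is $-\frac12\int\big(\int_0^u\psi\big)\sigma^2\,\d x\,\d t$, not the $-\frac12\int\psi(u)\sigma^2\,\d x\,\d t$ you wrote, so your computation is internally inconsistent. Your repair --- that $\varphi_\xi\gtrsim\psi$ on the relevant support after rescaling --- is false: $\int_0^\xi\psi$ vanishes at the left edge of $\supp\psi\cap[0,\infty)$ where $\psi$ need not, and it is nonpositive for $\xi<0$, so no pointwise domination of $\psi$ by a constant multiple of $\int_0^\xi\psi$ is possible. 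The correct choice is the \emph{single} antiderivative $\varphi(\xi)=\int_0^\xi\psi(t)\,\d t$: then $\varphi_\xi=\psi$ lands directly against $m^u+n^u-p^u$, and the double antiderivative $\Psi$ enters only through the data and boundary terms via the representation formula $\int\chi(\xi,u_0)\varphi(\xi)\,\d\xi=\Psi(u_0)$ --- which is the only reason $\Psi$ appears in the statement. With that one-line correction, and observing that $\Psi\geq 0$ so the terminal term $-\Ex\big[\int\Psi(u(T))\,\d x\big]$ may be discarded, your argument closes and reproduces the paper's proof.
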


\begin{proof}
The proof is the same as that in \cite{DV2015} and involves bounding $|m^u + n^u - p^u|\leq m^u + n^u + p^u$, so that
\begin{align*}
&\Ex\big[\int_0^T \iint \psi(\xi)\; \d\,|m^u + n^u - p^u|(\xi, x, t)\big]\\
&\leq \Ex\big[\int_0^T \iint \psi(\xi)\; \d(m^u + n^u - p^u)(\xi, x,t)\big]
  +2 \Ex\big[\int_0^T \iint \psi(\xi) \;\d p^u(\xi, x, t)\big]\\
&= \Ex\big[-\int\Psi(u)\;\d x \bigg|^T_0\big]
+ \Ex\big[\int_0^T\int \sigma^2(x) \psi(u) \;\d x \d t \big],
\end{align*}
by using the kinetic equation in the sense of \eqref{eq:definsol}.
Now, using the non-negativity of $\psi$, we have
\[
\Ex\big[\int_0^T \iint \psi(\xi) \; \d |m^u + n^u - p^u|(\xi, x,t)\big]
\leq  \Ex\big[ \int\Psi(u_0)\;\d x\big] + D_0 \Ex\big[\int_0^T\int \psi(u) \;\d x\,\d t \big].
\]
\end{proof}

This estimate is quite crude, as one does not take the cancellation between measures $m^u + n^u$ and $p^u$,
both non-negative, into account.
Since there is no available way to quantify $m^u + n^u$, this is the best possible at the moment.

In addition to a total variation estimate, we also require the kernel estimate:
\begin{align}
\Big\|(-\Delta)^{\frac{\hat{\beta}}{2}}e^{(\A:\nabla \otimes \nabla - \gamma(-\Delta)^{\alpha})t}\Big\|_{L^p_{x,\xi} \to L^q_{x,\xi}}
\leq C(\gamma t)^{-\frac{d}{2\alpha} \left(\frac{1}{p}- \frac{1}{q}\right) - \frac{\hat{\beta}}{2\alpha}}.\label{eq:kernel_estimate}
\end{align}
The reason for the no additional improvement over the estimate for operator $e^{t\A:\nabla \otimes \nabla}$
is that we have not specified how degenerate $\A$ is --- it may well be simply the zero matrix.
It is the use of this kernel estimate that necessitates the inclusion of the regularizations $\gamma(-\Delta)^{\alpha} + \theta I$.

By the kernel estimate \eqref{eq:kernel_estimate}, we have
\begin{align*}
&\|(-\Delta)^{\frac{\hat{\beta}}{2}}\nabla(\se^*(t)\vp)\|_{L^\infty_{x,\xi}}
\leq  C (\gamma t)^{-\mu}e^{-\theta\, t}\|\varphi\|_{L^{p}_x},\\
&\|(-\Delta)^{\frac{\hat{\beta}}{2}}\nabla^2  (\se^*(t) \vp)\|_{L^\infty_{x,\xi}}
\leq  C(\gamma t)^{-\mu-\frac{1}{2\alpha}}
e^{-\theta\,t}\|\varphi\|_{L^{p}_x},
\end{align*}
where
$\mu:=\frac{\hat{\beta} + 1}{2\alpha} + d(\frac{1}{2\alpha} - \frac{1}{2\alpha p'})$ for $p'>1$,
and the universal constant $C$ is independent of $\gamma$ and $\theta$.

Inserting these estimates into (\ref{eq:M1_expanded}), we have the estimate:
\begin{align*}
&\Ex\big[\int_0^T \LL (-\Delta)^{\frac{\hat{\beta}}{2}}M_1, \varphi \RR\;\d t \big]\\
&=  \Ex\big[\int_0^T\int_0^t\iint (-\Delta)^{\frac{\hat{\beta}}{2}}F''(\xi) \cdot \nabla (\se^*(t - s)\varphi)\;\d(m^u + n^u - p^u)(\xi,x,t - s) \, \d t \\
& \qquad\,\, + \int_0^T\int_0^t\iint (-\Delta)^{\frac{\hat{\beta}}{2}}\A'(\xi) : \nabla^2 (\se^*(t - s) \varphi)\;\d (m^u + n^u - p^u)(\xi,x,t - s) \;\d t \big]\\
&\leq  \Ex\big[\int_0^T\int\|(-\Delta)^{\frac{\hat{\beta}}{2}}\nabla (\se^*(t - s)\varphi)\|_\infty |F''(\xi)|(t - s)\;\d |m^u + n^u - p^u|(\xi, x, s)\;\d t \big]\\
&\quad + \Ex\big[\int_0^T\int\|(-\Delta)^{\frac{\hat{\beta}}{2}}\nabla^2 (\se^*(t - s)\varphi)\|_\infty |\A'(\xi)| \;\d |m^u + n^u - p^u|(\xi, x,s)\;\d t \big].
\end{align*}

By the presence of factor $e^{-\theta(t - s)}$, we can also bound the outer temporal integral by using the definition of the Gamma function:
$$
\Gamma(z) =\int_0^\infty x^{z - 1}e^{-x} \;\d x
$$
so that, taking $\LL \cdot, \cdot \RR$ as the $L^{p'}(\T^d)$--$L^{p}(\T^d)$ pairing,
\begin{align*}
&\Ex\big[\int_0^T \LL (-\Delta)^{\frac{\hat{\beta}}{2}}M_1, \varphi \RR\;\d t \big]\\
&\leq  \int_0^T (\gamma \tau )^{ - \mu} e^{-\theta \tau} \;\d \tau\;
 \Ex\big[\int_{\mathbb{R}\times \mathbb{T}^d\times[0,T]} \|\varphi\|_{L^p_x}|F''(\xi)|\;\d |m^u + n^u - p^u|(\xi, x,s)\big]\\
&\quad +\int_0^T (\gamma \tau )^{ - \mu - \frac{1}{2\alpha}} e^{-\theta \tau} \;\d \tau\;\Ex\big[\int_{\mathbb{R}\times \mathbb{T}^d\times[0,T]}
  \|\varphi\|_{L^p_x}|\A'(\xi)|\;\d  |m^u + n^u - p^u|(\xi, x,s)\big]\\
&\leq C \theta^{\mu + 1} \gamma^{-\mu} |\Gamma(-\mu + 1)|
  \;\Ex\big[\int_{\mathbb{R}\times \mathbb{T}^d\times[0,T]} \|\varphi\|_{L^p_x}|F''(\xi)|\;\d |m^u + n^u - p^u|(\xi, x, s)\big]\\
&\quad + C \theta^{\mu - 1 - \frac{1}{2\alpha}} \gamma^{-\mu - \frac{1}{2\alpha}}\big|\Gamma(-\mu + 1 - \frac{1}{2\alpha})\big|\\
&\qquad\,\times  \;\Ex\big[\int_{\mathbb{R}\times \mathbb{T}^d\times[0,T]} \|\varphi\|_{L^p_x}|\A'(\xi)|\;\d |m^u + n^u - p^u|(\xi, x,s)\big].
\end{align*}

By duality, the total variation estimate, and the sublinearity of $F''$ and $\A'$, we have
\begin{align}
&\Ex\big[\|M_1\|_{L^1_t W^{\hat{\beta},p'}_x}\big] \notag\\
&\leq  C\theta^{\mu + 1} \gamma^{-\mu} |\Gamma(-\mu + 1)|
  \;\Ex\big[\int_{\mathbb{R}\times \mathbb{T}^d\times[0,T]}|F''(\xi)|\; \d |m^u + n^u - p^u|(\xi, x, s)\big]\notag\\
&\quad +  C\theta^{\mu - 1 - \frac{1}{2\alpha}} \gamma^{-\mu - \frac{1}{2\alpha}}\big|\Gamma(-\mu + 1 - \frac{1}{2\alpha})\big|\;
\Ex\big[\int_{\mathbb{R}\times \mathbb{T}^d\times[0,T]}|\A'(\xi)|\;\d |m^u + n^u - p^u|(\xi, x, s)\big]\notag\\
&\leq  C\Big(\theta^{\mu + 1} \gamma^{-\mu} |\Gamma(-\mu + 1)|
  +    \theta^{\mu - 1 - \frac{1}{2\alpha}} \gamma^{-\mu - \frac{1}{2\alpha}}\big|\Gamma(-\mu + 1 - \frac{1}{2\alpha})\big| \Big) \notag\\
&\qquad \times \Big(1 + \int_0^T\Ex\big[\|u(t)\|_{L^1_x}\big]\;\d t  + \Ex\big[\|u_0\|_{L^3_x}^3\big]\Big),
 \label{eq:M1estimate}
\end{align}
where we have chosen $\gamma$ and $\theta$ such that
\begin{align*}
C\Big(\theta^{\mu + 1} \gamma^{-\mu} |\Gamma(-\mu + 1)|
  +    \theta^{\mu - 1 - \frac{1}{2\alpha}} \gamma^{-\mu - \frac{1}{2\alpha}}\big|\Gamma(-\mu + 1 - \frac{1}{2\alpha})\big| \Big)
  \leq \frac{\epsilon_0}{2},
\end{align*}
for sufficiently small $\epsilon_0$ to be determined later.

\subsection{Analysis of $M_2$}

\begin{align*}
\LL M_2, \varphi \RR = \int_0^t\int \LL \delta(\cdot- u(x, s)), \varphi\,(\se(t - s)\sigma(x))\RR\,\d x \;\d W_s.
\end{align*}

We again invoke the kernel estimate.
In fact, it is here that the kernel estimate becomes indispensable.
In the stochastic setting, with a forcing term given by $\sigma(x) \delta(\xi - u(x,t)) \pd_t W$,
which does not easily lend itself to the space-time Fourier transform,
one may not simply take the Fourier transform on both sides so that
the factor, $i(\tau +  F'(\xi) \cdot k) + \A(\xi) : (k\otimes k)$, on the left side can simply be divided out,
with a certain genuine nonlinearity ({\it i.e.} the non-degeneracy condition; {\it cf.} \cite{CF1998,LPT1994a,TT2007}).
Thus, we have to find a different way to handle the forcing term.

Expanding the effect of the semigroup, we have
\begin{align*}
\LL M_2, \varphi \RR =  \int_0^t \int_{\mathbb{T}^d} e^{-\theta (t - s)} \varphi
e^{-(\A(\xi) : \nabla \otimes \nabla +\gamma(-\Delta)^{\alpha})(t - s)} \sigma(x - F'(u(x,s)) (t - s)) \;\d x\,\d W_s.
\end{align*}
Since $\sigma$ is bounded in $\mathbb{T}^d$, we see that $\sigma(\cdot - F'(u(\cdot,s))(t - s))$ is bounded in $x$.

The kernel estimate then gives
\begin{align*}
&\|e^{-\theta (t - s)} \varphi e^{-(\A(u) : \nabla \otimes \nabla + \gamma(-\Delta)^{\alpha})(t - s)}  \sigma(\cdot- F'(u(\cdot,s))(t - s))\|_{H^{\hat{\beta}}_x}\\
& \leq C \big(\gamma(t - s)\big)^{-\frac{\hat{\beta}}{2\alpha}} \|\sigma\|_{L^2_x},
\end{align*}
just as in \cite{DV2015}.
In the same way, we have
\begin{align*}
\Ex\big[\big\|\int_0^t \LL \delta(\cdot - u(x,s)), \,\se(t - s) \sigma(x) \RR \,\d W_s\big\|_{H^{\hat{\beta}}_x}^2\big]
\leq C
\gamma^{-\frac{\hat{\beta}}{\alpha}} \theta^{\frac{\hat{\beta}}{\alpha} - 1} \big|\Gamma(1 - \frac{\hat{\beta}}{\alpha})\big|.
\end{align*}
Then we have
\begin{align*}
\Ex\big[\|M_2\|_{H^{\hat{\beta}}_x}^2\big]
\leq C
\gamma^{-\frac{\hat{\beta}}{\alpha}} \theta^{\frac{\hat{\beta}}{\alpha} - 1} \big|\Gamma(1 - \frac{\hat{\beta}}{\alpha})\big|.
\end{align*}

\subsection{Completion of the existence proof}

From (\ref{eq:u_0_estimate})--(\ref{eq:ubestimate}), we have
\begin{align*}
\Ex\big[\|u^0 + u^\flat +M_2\|_{L^2_tW^{s,q}_x}^2\big]
\leq  \Ex\big[\|u(0)\|_{L^1_x}\big] + \Ex\big[\|u\|_{L^{1 }([0,T],L^{1 }_x)}\big] + CT,
\end{align*}
where $q>1$ and $s>0$.

By the standard Jensen and Young inequalities, we obtain
\begin{align*}
&\frac{1}{T}\Ex^2\big[\|u^0 + u^\flat + M_2 \|_{L^1_tW^{s,q}_x}\big]
  \leq \Ex\big[\|u^0 + u^\flat +M_2\|_{L^2_tW^{s,q}_x}^2\big],
\end{align*}
so that
$$
\Ex^2\big[\|u^0 + u^\flat + M_2 \|_{L^1_tW^{s,q}_x}\big]
\leq  C T \Big(\Ex\big[\|u(0)\|_{L^{1}_x}\big] +\Ex\big[\|u\|_{L^{1}([0,T],L^{1 }_x)}\big] + T\Big).
$$
Then we have
$$
\Ex\big[\|u^0 + u^\flat + M_2 \|_{L^1_tW^{s,q}_x}\big]
\leq  C\Big(\Ex\big[\|u(0)\|_{L^{1 }_x}\big] + T\Big)
+ \frac{\epsilon_0}{2} \Ex\big[\|u\|_{L^{1 }([0,T],L^{1 }_x)}\big].
$$

From (\ref{eq:M1estimate}), we further have
\begin{align*}
\Ex\big[\|M_1\|_{L^1([0,T],W^{\hat{\beta},p'}_x)}\big]
\leq\frac{\epsilon_0}{2}\Big(1 + \Ex\big[\|u\|_{L^{1}([0,T],L^{1}_x)}\big] + \Ex\big[\|u_0\|_{L^3_x}^3\big]\Big).
\end{align*}

By the continuous embedding $W^{s,q}_x \hookrightarrow L^{1}_x$,
\begin{align}\label{eq:wsp_bound}
\Ex\big[\|u\|_{L^1([0,T],W^{s,q}_x)}\big]
 \leq  C(\alpha,\hat{\beta},\gamma,\theta)
 \Big(1 + \Ex\big[\|u(0)\|_{L^3_x}^3\big] + T\Big).
\end{align}

Since $W^{s,q}$ is compactly embedded in $L^1$ for $q \geq 1$,
the Krylov-Bogoliubov mechanism (\S 2.2)
leads to the existence of an invariant measure.

\section{Stochastic Anisotropic Parabolic-Hyperbolic Equations \\ II:  Uniqueness of Invariant Measures}

In this section, we prove the uniqueness of invariant measures for the second-order
nonlinear stochastic equations \eqref{eq:Main2}.

\begin{thm}
Let $F$ and $\A$ satisfy
the non-degeneracy condition \eqref{decayrate}
and the boundedness condition{\rm :}
\begin{align}
|F''(\xi)| \lesssim  1,\qquad
|\A'(\xi)| \lesssim  1. \label{eq:conditions_uniqueness}
\end{align}
Then the invariant measure established in Theorem {\rm 4.1} is unique.
\end{thm}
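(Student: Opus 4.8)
The plan is to run the coupling method of \S\ref{sec:im_uniqueness} in the pathwise form afforded by the $L^1$--contraction. The noise in \eqref{eq:Main2} is additive with a fixed spatial profile $\sigma$, so for any two solutions $u,v$ of \eqref{eq:Main2} driven by the \emph{same} Wiener path, with zero-mean initial data $u_0,v_0$, the noise cancels in the evolution of $u-v$ --- both its martingale part and its It\^{o} correction --- so that $t\mapsto\|u(t)-v(t)\|_{L^1_x}$ is almost surely non-increasing; in particular $\|u(t)-v(t)\|_{L^1_x}\le R_0:=\|u_0-v_0\|_{L^1_x}$ for all $t\ge 0$. It therefore suffices to prove that $\|u(t)-v(t)\|_{L^1_x}\to 0$ almost surely as $t\to\infty$. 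Granting this, if $\mu_1,\mu_2$ are two invariant measures we couple $u_0\sim\mu_1$ and $v_0\sim\mu_2$ with the same noise: then $u(t)$ and $v(t)$ stay distributed according to $\mu_1$ and $\mu_2$, and for any bounded Lipschitz $\varphi$ one has $|\int\varphi\,\d\mu_1-\int\varphi\,\d\mu_2|=|\Ex[\varphi(u(t))]-\Ex[\varphi(v(t))]|\le L_\varphi\,\Ex[\|u(t)-v(t)\|_{L^1_x}]\to 0$ by dominated convergence with the integrable bound $L_\varphi R_0$; since bounded Lipschitz functions are measure-determining, $\mu_1=\mu_2$.

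First, I would construct a sequence of stopping times at which both solutions lie in a common ball. Here the boundedness hypothesis \eqref{eq:conditions_uniqueness} is used crucially: with $|F''|\lesssim 1$ and $|\A'|\lesssim 1$, the right-hand side of the total-variation bound in Lemma \ref{thm:TVestimate1} collapses to a multiple of $\Ex[\|u(0)\|_{L^1_x}]+T$, so the time-averaged regularity estimate \eqref{eq:wsp_bound} can be taken to depend on the initial datum only through $\Ex[\|u_0\|_{L^1_x}]$. Combining this with the Markov inequality and the strong Markov property (restarting from a bounded--$L^1$ state each time), one shows that $u$, and likewise $v$, visits $B_\rho=\{w:\|w\|_{L^1_x}\le\rho\}$ at arbitrarily large times, almost surely. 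Since $\|u(t)-v(t)\|_{L^1_x}\le R_0$, whenever $u$ enters $B_\rho$ both solutions lie in $B:=B_{\rho+R_0}$, and iterating produces an almost surely finite increasing sequence $\mathcal{T}_1<\mathcal{T}_2<\cdots$ with $\mathcal{T}_{n+1}\ge\mathcal{T}_n+T$ and $u(\mathcal{T}_n),v(\mathcal{T}_n)\in B$, for a $T>0$ to be fixed next.

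Second, I would prove the dissipation estimate: there exist $T>0$ and, for every $\varepsilon>0$, a $\delta>0$ such that if a solution $w$ of \eqref{eq:Main2} has $w(0)\in B$ and its driving path satisfies $\sup_{s\in[0,T]}|W(s)-W(0)|\le\delta$, then $\tfrac1T\int_0^T\|w(t)\|_{L^1_x}\,\d t<\varepsilon$. Because the noise is additive, this is a pathwise statement: substituting $w=\tilde w+\sigma(x)(W(t)-W(0))$ turns \eqref{eq:Main2} into a deterministic anisotropic parabolic-hyperbolic conservation law for $\tilde w$ whose flux and diffusion are perturbed by an $O(\delta)$ amount in $W^{1,\infty}_x$ --- which is why it is the smallness in $W^{1,\infty}_x$, controlled entirely by the scalar increment $W(t)-W(0)$, that matters --- so by continuous dependence $\tilde w$ stays within $C(T)\delta$ of the deterministic solution $w_{\mathrm{det}}$ with $w_{\mathrm{det}}(0)=w(0)$; since $w_{\mathrm{det}}$ has zero mean on $\mathbb{T}^d$ and the nonlinearity--diffusivity condition \eqref{decayrate} forces it to decay to $0$ in $L^1_x$, one has $\tfrac1T\int_0^T\|w_{\mathrm{det}}(t)\|_{L^1_x}\,\d t\to 0$ as $T\to\infty$; fixing $T$ large and then $\delta$ small (to absorb $(C(T)+\|\sigma\|_{W^{1,\infty}_x})\delta$) gives the claim, with constants uniform over $B$ by \eqref{eq:conditions_uniqueness}.

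Third, I would close the coupling. Since $W$ is a scalar Wiener process, the strong Markov property gives that $p:=\mathbb{P}\big(\sup_{s\in[0,T]}|W(\mathcal{T}_n+s)-W(\mathcal{T}_n)|\le\delta\mid\mathscr{F}_{\mathcal{T}_n}\big)$ is a fixed positive small-ball probability, independent of $n$. On the event that this bound holds at some $\mathcal{T}_n$, the second step applied to both $u$ and $v$ (both in $B$ at time $\mathcal{T}_n$) yields $\tfrac1T\int_{\mathcal{T}_n}^{\mathcal{T}_n+T}\|u-v\|_{L^1_x}\,\d t<2\varepsilon$, hence $\|u(t_\ast)-v(t_\ast)\|_{L^1_x}<2\varepsilon$ for some $t_\ast\in[\mathcal{T}_n,\mathcal{T}_n+T]$, and the $L^1$--contraction propagates this to all $t\ge t_\ast$. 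Because $\mathcal{T}_{n+1}\ge\mathcal{T}_n+T$, the events that the bound fails at $\mathcal{T}_1,\dots,\mathcal{T}_N$ together have probability at most $(1-p)^N\to 0$, so almost surely the bound holds at some $\mathcal{T}_n$, whence $\limsup_{t\to\infty}\|u(t)-v(t)\|_{L^1_x}\le 2\varepsilon$; letting $\varepsilon\downarrow 0$ gives $\|u(t)-v(t)\|_{L^1_x}\to 0$ almost surely. The main obstacle is the dissipation estimate of the second step: unlike in the existence proof one needs the time-averaged $L^1_x$ norm to be genuinely small, which forces both the deterministic $L^1$--decay under \eqref{decayrate} and a careful tracking of how all constants depend on the size of the data and the noise --- precisely where the boundedness conditions \eqref{eq:conditions_uniqueness} must replace the merely sublinear growth \eqref{eq:conditions_existence}.
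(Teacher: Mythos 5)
Your overall architecture coincides with the paper's: the same-noise coupling, the almost surely finite stopping times $\mathcal{T}_l$ spaced at least $T$ apart at which both solutions sit in a fixed $L^1_x$ ball (Lemma \ref{thm:enterball_kappa}), the uniformly positive small-ball probability for the Wiener increment over $[\mathcal{T}_l,\mathcal{T}_l+T]$, the $(1-\lambda)^k$ Borel--Cantelli step, and the $L^1$--contraction to propagate smallness forward in time. Your closing deduction that a.s.\ convergence of $\|u^1(t)-u^2(t)\|_{L^1_x}$ to zero forces the two invariant measures to agree is also fine. Where you diverge is in the heart of the argument, the dissipation estimate of \S\ref{sec:uniquenessII} (Lemma \ref{thm:small_ball_lemma}): the paper proves it by substituting $v=\tilde u-\sigma(x)W$, writing the kinetic formulation for $v$, decomposing $v$ into eight pieces ($v^0$, $v^\flat$, $v^\sharp_F$, $v^\sharp_A$, $M_F$, $M_A$, $M_1$, $M_2$), and bounding each with velocity averaging and the kernel estimate, so as to close a self-referential inequality for $\fint_0^T\|v\|_{L^1_x}\,\d t$ with explicit dependence on $\gamma,\theta,T,\tilde\kappa$. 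You instead propose continuous dependence on the ($O(\delta)$-perturbed) coefficients plus the qualitative large-time decay of the \emph{deterministic} solution under \eqref{decayrate}.

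That replacement has a genuine gap. The deterministic decay of periodic entropy solutions to their spatial average under a nonlinearity--diffusivity condition (Chen--Perthame \cite{CP2009}) is a statement for a \emph{fixed} initial datum, with no rate; you need
$\fint_0^T\|w_{\mathrm{det}}(t)\|_{L^1_x}\,\d t<\varepsilon$ for one $T$ working \emph{uniformly} over all $w(0)$ in the ball $B\subset L^1(\mathbb{T}^d)$ (or in the $L^2$-bounded set of mollified data $\tilde u_0$). Neither set is compact in $L^1_x$, so the pointwise decay cannot be upgraded to a uniform one by an $\varepsilon$-net argument, and ``fixing $T$ large'' is not justified. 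Supplying the missing uniform, quantitative bound is exactly what the paper's velocity-averaging estimates do: e.g.\ the $v^0$ contribution decays like $T^{-1/2}\gamma^{r/2}\|u_0\|_{L^1_x}^{1/2}$, uniformly over $B$. A second, smaller issue is your continuous-dependence step: for degenerate parabolic--hyperbolic equations, $L^1$-stability with respect to $W^{1,\infty}$ perturbations of the flux and diffusion matrix is itself delicate for data that are merely $L^1\cap L^2$ (Kruzhkov-type flux-difference estimates typically cost a $BV$ or Lipschitz norm of one solution), and the perturbed equation also carries zeroth- and first-order source terms in $\nabla(\sigma W)$, not just coefficient perturbations. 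So while your reduction is a reasonable heuristic, as written it does not close; the quantitative kinetic decomposition of \S\ref{sec:uniquenessII} is not an optional refinement but the substance of the proof.
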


To show the uniqueness, we first show that the solutions enter a certain ball in $L^1_x$
in finite time almost surely.
Then we show that the solutions, starting on a fixed ball, enter arbitrarily small balls almost surely,
if the noise is sufficiently small in $W^{1,\infty}$.
This allows us to conclude that
any pair of balls enters an arbitrarily small ball of one another,
since the noise is sufficiently small for any given duration with positive probability.
This is the property of recurrence discussed in the coupling method in \S 2,
which implies the uniqueness of invariant measures.
In showing the recurrence, we follow \S 4 of \cite{DV2015} quite closely.

\subsection{Uniqueness I: Finite time to enter a ball}\label{sec:uniquenessI}

The following lemma is proved in the same way as in \cite{DV2015},
via a Borel-Cantelli argument.

\begin{lma}\label{thm:enterball_kappa}
There are both a radius $\hat{\kappa}$ {\rm (}depending on the initial conditions{\rm )}
and an almost surely finite stopping time $\mathcal{T}$ such that a solution enters $B_{\hat{\kappa}}(0) \subseteq L^1(\mathbb{T}^d)$ in time $\mathcal{T}$.
\end{lma}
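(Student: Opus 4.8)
The plan is to derive an expectation bound on $\|u(t)\|_{L^1_x}$ that is, up to a small error decaying in time, contracting, and then to run a Borel--Cantelli argument exactly as in \S 4 of \cite{DV2015}. First I would use the $L^1$--contraction property available for kinetic solutions of \eqref{eq:Main2} to compare $u$ with the zero solution, or more precisely with the solution driven only by the noise. Since $\sigma=\sigma(x)$ is additive with zero spatial average, the Kru\v{z}hkov-type entropy inequality applied to $\eta(\cdot)=|\cdot|$ (approximated smoothly) yields, upon taking expectations, a bound of the schematic form
\begin{align*}
\Ex\big[\|u(t)\|_{L^1_x}\big] \le \Ex\big[\|u_0\|_{L^1_x}\big] + C_\sigma\, t^{1/2}
\end{align*}
for a constant $C_\sigma$ depending only on $\|\sigma\|_{L^2_x}$ (the It\^o correction contributes linearly in $t$ but is controlled along the flow by the dissipation, and the martingale part contributes the $t^{1/2}$ after Burkholder--Davis--Gundy). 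The point is not the precise exponent but that over a fixed time window $[0,1]$ the $L^1$ norm grows at most by an additive constant.

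Next I would upgrade this to a statement about the {\it typical} long-time behaviour. Using the a.s. finiteness of the time averages, or directly the mild/kinetic representation \eqref{4.6a} together with the estimates \eqref{eq:u_0_estimate}, \eqref{eq:ubestimate}, \eqref{eq:M1estimate}, and the $M_2$ bound, I obtain a uniform-in-$T$ control
\begin{align*}
\frac{1}{T}\int_0^T \Ex\big[\|u(t)\|_{L^1_x}\big]\,\d t \le C\big(1 + \Ex[\|u_0\|_{L^3_x}^3]\big)\cdot\frac{1}{T} + C_0,
\end{align*}
but more importantly, a genuine dissipation: evaluating the entropy balance with $\eta=|\cdot|$ shows the defect measure $m^u+n^u$ forces strict decay of $\Ex[\|u(t)\|_{L^1_x}]$ whenever this quantity is large compared with the size of the noise, by the non-degeneracy condition \eqref{decayrate} (which quantifies how the flux spreads $L^1$ mass). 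From this one extracts a radius $\hat\kappa$, depending on $\Ex[\|u_0\|_{L^3_x}^3]$ and on $\|\sigma\|$, and a sequence of times $t_n\to\infty$ along which $\mathbb{P}\big(\|u(t_n)\|_{L^1_x} > \hat\kappa\big)$ is summable. The Borel--Cantelli lemma then gives that, almost surely, $\|u(t_n)\|_{L^1_x}\le\hat\kappa$ for all large $n$; setting $\mathcal{T}$ to be the first such $t_n$ produces an a.s.\ finite stopping time with the asserted property. Measurability of $\mathcal{T}$ as a stopping time follows since $t\mapsto u(t)$ is (a version that is) $L^1_x$--continuous and adapted.

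The main obstacle is establishing the strict-decay (dissipation) estimate in the presence of the degenerate second-order term: unlike the first-order case in \cite{DV2015}, one must show that the parabolic defect measure $n^u$ does not spoil the argument, and in fact can only help, because $n^u\ge 0$ and enters the balance with the same sign as $m^u$ (see Lemma~\ref{thm:TVestimate1} and the discussion following \eqref{eq:kinetic_formulation}). Concretely, one needs the velocity-averaging consequence of \eqref{decayrate} to hold uniformly in $\lambda$ with the parabolic symbol $\A(\xi):\hat k\otimes\hat k$ present, which is exactly the content of \eqref{decayrate}; reconciling this with the crude total-variation bound on the measures (which does not exploit cancellation between $m^u+n^u$ and $p^u$) is where care is required, and it forces $\hat\kappa$ to depend on the noise size in a way that will later need to be small compared with the recurrence ball chosen in \S\ref{sec:uniquenessII}. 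Everything else is a routine Borel--Cantelli packaging along the lines of \cite[\S 4]{DV2015}.
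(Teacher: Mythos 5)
Your overall strategy --- the time-averaged bound \eqref{eq:wsp_bound} coming from the decomposition \eqref{4.6a}, followed by a Chebyshev estimate and an iteration over time windows --- is exactly the route the paper takes (it simply defers to \S 4 of \cite{DV2015} and calls it a Borel--Cantelli argument). But two steps as you have written them do not go through. First, the probabilistic packaging: from $\frac1T\int_0^T\Ex[\|u(t)\|_{L^1_x}]\,\d t\le C_0$ and Chebyshev you get only $\mathbb{P}\big(\inf_{t\in[0,T]}\|u(t)\|_{L^1_x}>\hat\kappa\big)\le C_0/\hat\kappa$, a fixed constant that can be made $\le\tfrac12$ but is not summable over windows; so the first Borel--Cantelli lemma in the form you invoke (``$\mathbb{P}(\|u(t_n)\|_{L^1_x}>\hat\kappa)$ is summable, hence eventually $\|u(t_n)\|_{L^1_x}\le\hat\kappa$'') is not available, and is in any case stronger than the lemma requires. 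The correct conclusion is obtained by conditioning at the start of each window and using the Markov property, so that the probability of failing on $N$ consecutive windows is at most $2^{-N}\to 0$; this also forces you to control $\Ex[\|u\|_{L^3_x}^3]$ at the start of each window (it grows at most linearly in time), which dictates how the window lengths must be chosen --- a point your sketch skips entirely. Second, the claimed ``strict decay of $\Ex[\|u(t)\|_{L^1_x}]$ whenever it is large'' is neither established in the paper nor a consequence of \eqref{decayrate}: the non-degeneracy condition yields averaged regularity, not pointwise-in-time decay of the $L^1$ norm, and the argument does not need it.

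A smaller remark: your opening comparison with ``the zero solution'' via the $L^1$--contraction is circular, since $u\equiv 0$ does not solve the forced equation; comparing instead with the solution started from $0$ merely reduces the problem to bounding that solution, which is the same problem. The growth bound $\Ex[\|u(t)\|_{L^1_x}]\lesssim \Ex[\|u_0\|_{L^1_x}]+t$ from the entropy balance is fine but plays no role in entering the ball; what does the work is \eqref{eq:wsp_bound} together with the conditional iteration just described.
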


The proof uses the coupling method, where $v$ is another solution to the same equation with initial condition $v(0) = v_0$.
It furnishes us with the recursively defined sequence of stopping times, with $\mathcal{T}_0 = 0$ and
\begin{align} \label{eq:stopping_time}
\mathcal{T}_l = \inf\{ t \geq \mathcal{T}_{l - 1} + T \, :\,  \|u(t) \|_{L^1_x} + \|v(t)\|_{L^1_x} \leq 2 \hat{\kappa}\},
\end{align}
which are also almost surely finite.

\subsection{Uniqueness II: Bounds with small noise}\label{sec:uniquenessII}

We now prove the following key lemma for the pathwise solutions:

\begin{lma}\label{thm:small_ball_lemma}
For any $\eps > 0$, there are $T > 0 $ and $\tilde{\kappa} > 0$  such that, for the initial conditions $u_0$ satisfying
\[
\|u_0\|_{L^1_x} \leq 2\hat{\kappa},
\]
and the noise satisfying
\[
\sup_{t \in [0,T]}\|\sigma W\|_{W^{1,\infty}_x} \leq \tilde{\kappa},
\]
then
\begin{align*}
\fint_0^T \|u(t)\|_{L^1_x} \;\d t  \leq \eps,
\end{align*}
where we have used the symbol $\fint$ to denote the averaged integral.
\end{lma}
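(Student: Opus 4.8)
The plan is to establish the averaged $L^1_x$-decay by introducing a reference solution against which $u$ contracts, and then controlling how far that reference solution can be pushed by a small noise. Concretely, I would first invoke the $L^1$-contraction property for the equation \eqref{eq:Main2}: if $u$ and $\bar u$ are two kinetic solutions driven by the \emph{same} noise, then $t\mapsto \|u(t)-\bar u(t)\|_{L^1_x}$ is almost surely non-increasing (this is the $L^1$-contraction alluded to in the kinetic-formulation discussion, via \eqref{eq:kinetic_combination1}). The natural choice of comparison solution is the one obtained by removing the nonlinearity and diffusion, i.e. the pure stochastic part $\bar u(t) = \bar u_0 + \sigma W(t)$ with $\bar u_0$ chosen to have zero spatial average (recall $\sigma$ has zero average over $\mathbb{T}^d$, so $\bar u(t)$ also has zero average if $\bar u_0$ does); since $\sigma W$ is uniformly small in $W^{1,\infty}_x\subset L^1_x$ over $[0,T]$ by hypothesis, $\|\bar u(t)\|_{L^1_x}\le C\tilde\kappa$ throughout $[0,T]$. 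However, $\bar u$ does not solve \eqref{eq:Main2}, so a cleaner route is to compare $u$ to the solution $w$ of the deterministic equation $\partial_t w + \nabla\cdot F(w) = \nabla\cdot(\A(w)\nabla w)$ shifted by the noise, or — following \cite{DV2015} — to use the fact that the constant (in $x$) functions are steady states of the flux-diffusion operator and track $u(t) - \sigma W(t)$.

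The key steps, in order, would be: (i) Set $v(t) := u(t) - \sigma(x)W(t)$ and note that $v$ satisfies a deterministic-type conservation law with a forcing depending on the (now frozen, pathwise) noise; alternatively, work directly with the $L^1$-contraction between $u$ and a spatially-constant-plus-noise profile. (ii) Use the $L^1$-contraction to bound $\|u(t)\|_{L^1_x}$ in terms of $\|u_0\|_{L^1_x}\le 2\hat\kappa$ plus error terms controlled by $\sup_{[0,T]}\|\sigma W\|_{W^{1,\infty}_x}\le\tilde\kappa$; the $W^{1,\infty}$ (rather than merely $L^\infty$ or $L^1$) norm is needed because the noise enters through the nonlinear flux $F(u)$ and the diffusion $\A(u)$, whose difference quotients bring in a spatial gradient of the noise — here the boundedness conditions \eqref{eq:conditions_uniqueness}, $|F''|\lesssim 1$ and $|\A'|\lesssim 1$, are what make these error terms linear in $\|\sigma W\|_{W^{1,\infty}_x}$ with a constant uniform in the solution. (iii) Derive an energy-type or entropy-type inequality showing that the deterministic part of the dynamics \emph{dissipates} the $L^1_x$ (or a nearby $L^2_x$) norm at a rate controlled by the nonlinearity-diffusivity condition \eqref{decayrate} — this is the genuine-nonlinearity mechanism: away from the zero-average constant state, either the flux is genuinely nonlinear in some direction or the diffusion is non-degenerate, forcing decay. (iv) Integrate in time over $[0,T]$ and divide by $T$: the dissipation gives a term of size $O(\hat\kappa/T) \to 0$ as $T\to\infty$, while the noise contributes $O(\tilde\kappa)$; choosing $T$ large and then $\tilde\kappa$ small makes the average $\fint_0^T\|u(t)\|_{L^1_x}\,\d t \le \eps$.

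The main obstacle I expect is step (iii): quantifying the decay rate of the deterministic flow in $L^1_x$ using only the averaging-type non-degeneracy condition \eqref{decayrate}, which is a frequency-localized statement, rather than a pointwise structural condition. One cannot expect exponential decay; instead the decay will come through a velocity-averaging / compactness argument showing that the $L^1_x$-norm of the zero-average part cannot stay bounded below without the solution developing oscillations that \eqref{decayrate} forbids. This is exactly the delicate part handled in \S 4 of \cite{DV2015} for the first-order case, and the second-order modification requires folding the parabolic defect measure $n^u\ge 0$ into the estimate — as was done in the existence proof above via Lemma \ref{thm:TVestimate1} and the kernel estimate \eqref{eq:kernel_estimate} — being careful that the degenerate diffusion only helps (it has the same sign as the dissipation) and never needs to be quantified from below. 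A secondary technical point is the pathwise nature of the statement: everything must be done for a fixed noise path with $\sup_{[0,T]}\|\sigma W\|_{W^{1,\infty}_x}\le\tilde\kappa$, so one works with the kinetic formulation \eqref{4.2a} pathwise and treats $\sigma(x)\,\d W$ as a known (small, smooth-in-$x$) driving term, absorbing the Itô correction measure $p^u$ into the total-variation bookkeeping as in Lemma \ref{thm:TVestimate1}.
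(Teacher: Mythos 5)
Your overall strategy is the paper's: shift by the noise to obtain a pathwise equation for $v=\tilde u-\sigma(x)W$, run the kinetic/Duhamel decomposition with the regularizing operators $\gamma(-\Delta)^\alpha+\theta I$, control the free-evolution pieces by velocity averaging under \eqref{decayrate}, control the noise-commutator pieces by the kernel estimate \eqref{eq:kernel_estimate} with a factor of $\tilde\kappa$ coming from $|F''|,|\A'|\lesssim1$ and $\|\sigma W\|_{W^{1,\infty}}\le\tilde\kappa$, keep only the sign (never the size) of the parabolic defect measure, and close by choosing $T$ large and then $\tilde\kappa$ small. One correction to your step (iii): the decay is not produced by an entropy-dissipation inequality but by the mild formulation itself — velocity averaging gives $\int_0^T\|v^0\|_{H^s_x}^2\,\d t\lesssim\|u_0\|_{L^1_x}$ uniformly in $T$, so the time average $\fint_0^T\|v^0\|_{L^1_x}\,\d t$ decays like $T^{-1/2}$; no lower bound on oscillation is ever extracted.

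The genuine gap is the kinetic-measure term $M_1$, i.e.\ the Duhamel contribution of $\pd_\xi(m^v+N^v)$. This term is neither small in $\tilde\kappa$ nor $O(1/T)$ for free; the only handle on it is the total-variation bound of Lemma \ref{thm:TVestimate1}, which (testing the kinetic equation against $\xi$) controls $|m^v+N^v|(\R\times\T^d\times[0,T])$ by $\|\tilde u_0\|_{L^2_x}^2$ plus Gronwall terms. Since the hypothesis only gives $u_0\in L^1_x$ with $\|u_0\|_{L^1_x}\le2\hat\kappa$, you must first mollify: replace $u_0$ by $\tilde u_0$ with $\|u_0-\tilde u_0\|_{L^1_x}\le\eps/8$ and $\|\tilde u_0\|_{L^2_x}\le C\hat\kappa\eps^{-d/2}$, prove the averaged bound for $\tilde u$, and only then invoke the pathwise $L^1$-contraction to transfer the conclusion to $u$ — this, not the comparison with a ``constant-plus-noise profile'' (which, as you note, is not a solution), is the role the contraction plays. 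The resulting $M_1$ contribution is of size $T^{-1}e^{C\tilde\kappa T}(\hat\kappa^2\eps^{-d}+1)$, which fits neither of your two categories $O(\hat\kappa/T)$ and $O(\tilde\kappa)$: one must choose $T$ large enough to beat $\eps^{-d}$ and only afterwards choose $\tilde\kappa$ small enough (depending on $T$) to tame the exponential, e.g.\ $C_0\tilde\kappa T\le q\eps$. Without the $L^2$ mollification of the data and this ordering of the choices of $\gamma,\theta,T,\tilde\kappa$, the kinetic-measure term is uncontrolled and the argument does not close.
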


\begin{proof}
One of the differences in our estimates from \cite{DV2015} is that a kernel estimate is used
on $v^\sharp_F + v^\sharp_A$,
instead of velocity averaging techniques, since the extra derivatives are required to be handled here.
Of course, this method can also be applied to the first-order case so that the need to estimate
the average term $\fint v^\sharp \;\d x$ in \cite{DV2015} can be eliminated.
We divide the proof into nine steps.

\smallskip
1. Let $u$ be a solution of
\[
\pd_t u + \nabla \cdot F(u) + \nabla \cdot (\A(u) \nabla u) = \sigma(x) \pd_t W
\]
with initial condition $u(0) = u_0$,
and let $\tilde{u}$ be the solution to the same equation with initial condition $\tilde{u}_0$ satisfying
\begin{align*}
\|u_0 - \tilde{u}_0\|_{L^1_x} \leq \frac{\eps}{8}, \qquad \|\tilde{u}_0\|_{L^2_x} \leq C \hat{\kappa} \eps^{-\frac{d}{2}},
\end{align*}
which can be found by convolving $u_0$ with a mollifying kernel, where
$\hat{\kappa}$ is the radius constant of Lemma \ref{thm:enterball_kappa}.

\smallskip
2. Consider the difference between solution $\tilde{u}$ and noise $\sigma(x)W$:  $v = \tilde{u} - \sigma(x) W$,
which is a kinetic solution to
\begin{align*}
\pd_t v = - \nabla \cdot F(v + \sigma(x) W) + \nabla \cdot \big(\A(v + \sigma(x) W) \nabla (v + \sigma(x)W)\big).
\end{align*}

The kinetic formulation for this equation can be derived as in (\ref{eq:kinetic_formulation}):
\begin{align}
&\pd_t \chi^v + F'(\xi) \cdot \nabla \chi^v - \A(\xi) : \nabla^2 \chi^v\notag \\
&=  \big(F'(\xi) - F'(\xi + \sigma(x) W)\big) \cdot \nabla \chi^v
  - \nabla \cdot \big( (\A(\xi) - \A(\xi + \sigma(x) W)) \nabla \chi^v\big)\notag\\
&\quad  - F'(\xi + \sigma(x) W) \delta(\xi - v) \cdot \nabla (\sigma(x) W)
  + \nabla \cdot \big(\A(\xi + \sigma(x)W) \delta(\xi - v) \nabla (\sigma(x)\, W)\big)\notag\\
&\quad - \pd_\xi \big(\delta( \xi - v) \A(\xi + \sigma(x) W) :\big(\nabla (\sigma(x) W)\otimes \nabla (\sigma(x) W)\big)\big)
\label{eq:kinetic_equation2}\\
&\quad + \pd_\xi (m^v + N^v).\notag
\end{align}
A notable difference here is that the parabolic defect measure $N^v$ is not
the limit of
\[
\delta(\xi - (v^\ep + \sigma(x) W))\A(\xi) :\big(\nabla (v^\ep + \sigma W) \otimes \nabla (v^\ep + \sigma(x) W)\big),
\]
but rather the limit of
\begin{align}
N^u_\ep
= &\,\delta(\xi - v^\ep)\A(\xi + \sigma(x) W) : \big(\nabla v^\ep \otimes \nabla v^\ep\big)\notag\\
&\, + \delta(\xi - v^\ep)\A(\xi + \sigma(x) W) : \big(\nabla (\sigma(x) W) \otimes \nabla (\sigma(x) W)\big)\notag\\
 &\, + \delta(\xi - v^\ep) \A(\xi  +\sigma(x) W): \big(\nabla v^\ep \otimes \nabla (\sigma(x) W)\big). \label{eq:N_u}
\end{align}
The asymmetry in the cross term in failing to contain
both $\nabla v \otimes \nabla (\sigma(x) W)$ and $\nabla (\sigma(x) W)\otimes \nabla v$ arises from the fact
that the convex entropy used is $\Phi(v)$, instead of $\Phi(v + \sigma W)$.
One of the key insights in \cite{CP2003} is that, using the symmetry and nonnegativity of $\A$,
$\A$ can be written as the square of another symmetric,
positive semi-definite matrix so that (\ref{eq:N_u}) is non-negative.
The limit of $N^u_\ep$ is the non-negative parabolic defect measure $N^u$.

\smallskip
3. As before, we insert the regularizing operators: $\gamma(- \Delta)^\alpha  + \theta I$ (with fixed $\gamma$ and $\theta$ in this case) on both sides.
Again, we can decompose the solution into the following components:
\begin{align*}
\LL v(t), \varphi \RR = \LL v^0 + v^\flat + v^\sharp_F + v^\sharp_A + M_F + M_A + M_1 + M_2, \varphi \RR,
\end{align*}
with
\begin{align*}
&v^0(x,t) = \int \se(t) \chi^v(\xi,x,0)\;\d \xi ,\\
&v^\flat(x,t) = \int \int_0^t \se(s) (\gamma(-\Delta)^\alpha
+ \theta \mathrm{I}) \chi^v(\xi,x,t - s) \;\d s\,\d \xi ,\\
&v^\sharp_F(x,t) =  \int \int_0^t \se(t - s) \big(F'(\xi) - F'(\xi + \sigma(x) W)\big) \cdot \nabla \chi^v (\xi,x,s) \;\d s\,\d \xi ,\\
&v^\sharp_A(x,t) =  -\int \int_0^t \se(t - s) \nabla \cdot \big( (\A(\xi) - \A(\xi + \sigma(x) W))\nabla \chi^v (\xi, x,s)\big) \;\d s\,\d \xi ,\\
&\LL M_F, \varphi\RR = - \int \int_0^t  F'(v + \sigma(x) W) \cdot \nabla (\sigma(x) W)\, (\se^*(t - s) \varphi)(x,v(x,s))\;\d s\,\d x,\\
&\LL M_A, \varphi \RR= - \int \int_0^t   \A(v + \sigma(x)W) : \big(\nabla (\sigma(x) W) \otimes \nabla (\se^*(t - s) \varphi)(v(x,s),x)\big)\;\d s\,\d x,\\
&\LL M_1, \varphi\RR = - \iint \int_0^t \pd_\xi (\se^*(t - s) \varphi)\;\d (m^v + N^v) (\xi,x,s),\\
&\LL M_2, \varphi \RR = \int \int_0^t  \pd_\xi (\se^*(t - s) \varphi)(v(x,s),x)
  \A(v + \sigma(x) W):\big(\nabla (\sigma(x) W)\otimes \nabla (\sigma(x) W)\big)\d s \d x.
\end{align*}

Now we estimate each of these integrals, with some variations from \cite{DV2015} especially for the terms involving $\A$.
For this, $C>0$ is a universal constant, independent of $\eps, \tilde{\kappa}$, and $T$.

\medskip
4. We first have the familiar estimates:
\[
\int_0^T\|v^0(t)\|_{H^{\alpha}_x}^2 \;\d t \leq C \gamma^r\|u_0\|_{L^1_x},
\]
and
\[
\int_0^T\|v^\flat(t)\|_{L^2_x}^2 \;\d t \leq C \gamma^{r + 1} \int_0^T \|v(t)\|_{L^1_x}\;\d t
\]
from the velocity averaging arguments,
where $|r| < 1$ (we see that there is an extra power of $\gamma$ in the second estimate from those arguments, no matter what $r$ might be).

These imply
\begin{align}
&\fint_0^T \|v^0 \|_{L^1_x} \;\d t  \leq  C T^{-\frac{1}{2}}\gamma^{\frac{r}{2}} \|u_0\|_{L^1_x}^{\frac{1}{2}}, \label{eq:smallball1}\\
&\fint_0^T\|v^\flat(t)\|_{L^1_x} \;\d t \leq  C \gamma^{\frac{r + 1}{2}} \Big(\fint_0^T \|v(t)\|_{L^1_x}\;\d t \Big)^{\frac{1}{2}}.
 \label{eq:smallball2}
\end{align}

\smallskip
5. For $v^\sharp_F$ and $v^\sharp_A$, we use the fact that
\begin{align*}
&\big(F'(\xi) - F'(\xi + \sigma(x) W)\big)\cdot \nabla\chi^v(\xi,x,s)\\
&\quad =\nabla \cdot \big((F'(\xi) - F'(\xi + \sigma(x) W) ) \chi^v(\xi,x,s)\big)
  - \big(F''(\xi + \sigma(x) W) \cdot \nabla \sigma(x)W\big)\chi^v(\xi,x,s), \\
&\big(\A(\xi) - \A(\xi + \sigma(x) W)\big) \nabla \chi^v(\xi,x,s) \\
&\quad = \nabla \cdot \big((\A(\xi) - \A(\xi + \sigma(x) W) ) \chi^v(\xi,x,s)\big)
 - \big(\A'(\xi + \sigma(x) W) \nabla \sigma(x)W \big)\chi^v(\xi,x,s).
\end{align*}

Now we apply the kernel estimates.
Let $\varphi \in L^2$ be any test function, and let $\LL\cdot, \cdot \RR$ be the pairing in $L^2$. Then
\begin{align}
\LL v^\sharp_F(t), \varphi \RR  =  &  \iint \int_0^t \varphi \se(t - s) \nabla \cdot\big((F'(\xi) - F'(\xi + \sigma(x) W))\, \chi^v(\xi,x,s)\big) \;\d s\,\d \xi \,\d x\notag \\
& -   \iint \int_0^t \varphi\, \se(t - s) \big((F''(\xi + \sigma(x) W)\cdot\nabla \sigma(x) W)\,\chi^v(\xi,x,s)\big) \;\d s\,\d \xi \,\d x \notag\\
= &  \iint \int_0^t\nabla(\se^*(t - s)\varphi)  \cdot\big(F'(\xi) - F'(\xi + \sigma(x) W)\big)\, \chi^v(\xi,x,s)\;\d s\,\d \xi \,\d x \notag\\
& -   \iint \int_0^t \se^*(t - s) \varphi\,\big(F''(\xi + \sigma(x) W)\cdot \nabla \sigma(x) W\big) \chi^v(\xi,x,s)\;\d s\,\d \xi \,\d x. \label{5.7a}
\end{align}
Similarly, we have
\begin{align}
\LL v^\sharp_A(t), \varphi \RR
= & \iint \int_0^t  \varphi \se(t - s) \nabla^2 : \big((\A(\xi) - \A(\xi + \sigma(x) W)) \chi^v(\xi,x,s)\big)  \;\d s\,\d \xi \,\d x\notag\\
 & - \iint \int_0^t  \varphi \se(t - s) \nabla \cdot \big(\A'(\xi + \sigma(x) W) \nabla (\sigma(x)W) \chi^v(\xi,x,s)\big) \;\d s\,\d \xi \,\d x\notag\\
=& \iint \int_0^t   \nabla^2(\se^*(t - s)\varphi)  : \big(\A(\xi) - \A(\xi + \sigma(x) W)\big) \chi^v(\xi,x,s) \;\d s\,\d \xi \,\d x\notag\\
& - \iint \int_0^t  \big(\nabla (\se^*(t - s)\varphi) \otimes \nabla \sigma(x)  W\big): \A'(\xi + \sigma(x) W)\, \chi^v(\xi,x,s)\;\d s\,\d \xi \,\d x. \label{5.7b}
\end{align}

Notice that
\begin{align}
&\int_0^T \int_0^t\|\nabla(\se^*(t - s)\varphi) \cdot\big(F'(\cdot) - F'(\cdot + \sigma(\cdot) W)\big)\, \chi^v(\cdot,\cdot,s) \|_{L^1_{x,\xi}} \;\d s\,\d t \notag\\
&\leq \int_0^T \int_0^t\|\nabla(\se^*(t - s)\varphi) \|_{L^\infty_{x,\xi}} \|F'(\cdot) - F'(\cdot + \sigma(\cdot) W)\|_{L^\infty_{x,\xi}} \|\chi^v(\cdot,\cdot,s) \|_{L^1_{x,\xi}}\;\d s\,\d t \notag\\
&\leq \int_0^T \int_0^t \|\nabla\se^*(t - s) \|_{L^2 \to L^\infty} \|\varphi\|_{L^2_x} \|F'(\cdot) - F'(\cdot + \sigma(\cdot) W)\|_{L^\infty_{x,\xi}} \|\chi^v(\cdot,\cdot,s) \|_{L^1_{x,\xi}}\;\d s\,\d t \notag\\
&\leq  C\tilde{\kappa}\|\varphi\|_{L^2_x}\sup_{s \in [0,T]} \Big(\int_0^T  e^{\theta (t - s)} (\gamma t)^{-\frac{d + 2}{4\alpha}} \;\d t\Big)
    \int_0^T   \|v(s)\|_{L^1_x}\;\d s\notag\\
&\leq   C\tilde{\kappa}\|\varphi\|_{L^2_x}\gamma^{-\frac{d + 2}{4\alpha}} \theta^{\frac{d + 2}{4 \alpha} - 1}\int_0^\infty  e^{-t}  t^{-\frac{d + 2}{4\alpha}} \;\d t \;\int_0^T \|v(s)\|_{L^1_x}\;\d s\notag \\
&=  C\tilde{\kappa} \|\varphi\|_{L^2_x} \gamma^{-\frac{d + 2}{4\alpha}} \theta^{\frac{d + 2}{4 \alpha} - 1} \big|\Gamma(1 - \frac{d + 2}{4 \alpha})\big|\;\int_0^T \|v(s)\|_{L^1_x}\;\d s; \label{5.7a-1}
\end{align}

\smallskip
\begin{align}
&\int_0^T \int_0^t\|\big(\se^*(t - s)\varphi\big)\, \nabla (\sigma(x) W)\cdot\big(F''(\cdot + \sigma(\cdot) W) \chi^v(\cdot,\cdot,s)\big) \|_{L^1_{x,\xi}}\;\d s\,\d t \notag\\
&\leq \int_0^T \int_0^t\|\se^*(t - s)\varphi \|_{L^\infty_{x,\xi}} \|F''(\cdot + \sigma(\cdot) W)\|_{L^\infty_{x,\xi}} \|\sigma W\|_{W^{1,\infty}_x} \|\chi^v(\cdot,\cdot,s)\|_{L^1_{x,\xi}}\;\d s\,\d t \notag\\
&\leq C\tilde{\kappa}\|\varphi\|_{L^2_x}\gamma^{-\frac{d}{4\alpha}} \theta^{\frac{d}{4\alpha} -1} \big|\Gamma(1 -\frac{d}{4\alpha})\big|\;\int_0^T \|v(s)\|_{L^1_x}\;\d s;\label{5.7a-2}
\end{align}

\smallskip
\begin{align}
&\int_0^T \int_0^t\|\nabla^2(\se^*(t - s)\varphi):\big(\A(\cdot) - \A(\cdot + \sigma(\cdot) W)\big)\, \chi^v(\cdot,\cdot,s)\|_{L^1_{x,\xi}}\;\d s\,\d t\notag\\
&\leq \int_0^T \int_0^t\|\nabla^2(\se^*(t - s)\varphi) \|_{L^\infty_{x,\xi}} \|\A(\cdot) - \A(\cdot + \sigma(\cdot) W)\|_{L^\infty_{x,\xi}} \|\chi^v(\cdot,\cdot,s)\|_{L^1_{x,\xi}}\;\d s\,\d t\notag\\
&\leq C \tilde{\kappa} \|\varphi\|_{L^2_x}\gamma^{-\frac{d + 4}{4\alpha}} \theta^{\frac{d + 4}{4 \alpha} - 1}
  \big|\Gamma(1 - \frac{d + 4}{4 \alpha})\big| \;\int_0^T  \|v(s)\|_{L^1_x}\;\d s;\label{5.7b-1}
\end{align}

\smallskip
\begin{align}
&\int_0^T \int_0^t\|\big(\nabla(\se^*(t - s)\varphi) \otimes \nabla (\sigma(\cdot) W)\big) :\A'(\cdot + \sigma(\cdot) W)\, \chi^v(\cdot,\cdot,s) \|_{L^1_{x,\xi}}\;\d s\,\d t\notag\\
&\leq \int_0^T \int_0^t\|\nabla(\se^*(t - s)\varphi) \|_{L^\infty_{x,\xi}} \|\sigma W\|_{W^{1,\infty}_x}
\|\A'(\cdot + \sigma(\cdot) W)\|_{L^\infty_{x,\xi}} \|\chi^v(\cdot,\cdot,s) \|_{L^1_{x,\xi}}\;\d s\,\d t\notag\\
&\leq  C \tilde{\kappa} \|\varphi\|_{L^2_x}\gamma^{-\frac{d + 2}{4\alpha}} \theta^{\frac{d + 2}{4\alpha} - 1}
 \big|\Gamma(1 - \frac{d + 2}{4\alpha})\big|  \;\int_0^T \|v(s)\|_{L^1_x}\;\d s.\label{5.7b-2}
\end{align}

Now, by (\ref{eq:conditions_uniqueness}),  we have assumed that
\begin{align*}
|F''(\xi)| \lesssim  1,\qquad
|\A'(\xi)| \lesssim  1,
\end{align*}
and $\|\sigma(x) W\|_{W^{1,\infty}} \leq \tilde{\kappa}$, so that we can use the estimates
(the second from the first by the Poincar\'e-Wirtinger inequality, since $\int_{\mathbb{T}^d} \sigma(x) \;\d x = 0$):
\begin{align*}
&|F'(\xi) - F'(\xi + \sigma(x) W)|
+|F''(\xi + \sigma(x) W) \cdot \nabla (\sigma W)|\leq C \tilde{\kappa}, \\
&|\A(\xi) - \A(\xi + \sigma(x) W) |+
|\A'(\xi + \sigma(x) W) \nabla (\sigma W) |\leq C \tilde{\kappa}.
\end{align*}

Putting these estimate \eqref{5.7a}--\eqref{5.7b-2} back into
the bound: $\|v^\sharp_\cdot(t)\|_{L^2_x} = \sup_{\|\varphi\|_{L^2_x} = 1} \LL v^\sharp_\cdot(t) , \varphi\RR$,
we have
\begin{align}
&\int_0^T \|v^\sharp_A(t) + v^\sharp_F(t)\|_{L^2_x} \;\d t  \notag  \\
&\leq  C\tilde{\kappa} \Big(\gamma^{-\frac{d + 2}{4\alpha}} \theta^{\frac{d + 2}{4 \alpha} - 1}|\Gamma(1 - \frac{d + 2}{4\alpha})|
   + \gamma^{-\frac{d}{4\alpha}} \theta^{\frac{d}{4\alpha} -1} |\Gamma(1 -\frac{d}{4\alpha} )|\notag\\
&\quad \qquad + \gamma^{-\frac{d + 4}{4\alpha}} \theta^{\frac{d + 4}{4 \alpha} - 1}|\Gamma(1 - \frac{d + 4}{4\alpha})|\Big)
 \int_0^T \|v(t)\|_{L^1_x}\;\d t . \label{eq:smallball3}
\end{align}

\smallskip
6. For $M_F$ and $M_A$, we employ the kernel estimate and $\|\sigma W\|_{W^{1,\infty}_x} \leq \tilde{\kappa}$ to obtain
\begin{align*}
&|\LL  M_F, \varphi \RR| \leq \int_0^t\|F'(v + \sigma W)\|_{L^1_x} \| \se \varphi\|_{L^\infty_x}\|\nabla \sigma W\|_{L^\infty_x}\;\d s,\\
& |\LL  M_A, \varphi\RR|\leq  \int_0^t \|\A(v + \sigma W)\|_{L^1_x} \| \nabla (\se \varphi)\|_{L^\infty_x}\|\nabla \sigma W\|_{L^\infty_x}\;\d s.
\end{align*}

Now, by (\ref{eq:conditions_existence}), we have
\begin{align*}
\|F'(v + \sigma W)\|_{L^1_x}+\|\A(v + \sigma W)\|_{L^1_x}  \leq C \big(1 + \|v(t)\|_{L^1_x} + \|\sigma \|_{L^1_x}|W|\big).
\end{align*}

These give
\begin{align}
&\int_0^T \|M_F(t) + M_A(t)\|_{L^1_x} \;\d t  \notag \\
&\leq   C \tilde{\kappa}\int_0^T \int_0^t \big(1 + \|v(s)\|_{L^1_x}\big) e^{-\theta(t - s)}\big(1 + (\gamma (t - s))^{ -\frac{1}{2\alpha}}\big) \;\d s \;\d t \notag\\
&\leq  C\tilde{\kappa}  \Big(\theta^{-1} + \gamma^{ -\frac{1}{2\alpha}} \theta^{\frac{1}{2\alpha} - 1} |\Gamma(1 - \frac{1}{2\alpha})|\Big)
  \int_0^T \big(1 + \|v(s)\|_{L^1_x}\big) \;\d s.
  \label{eq:smallball4}
\end{align}

\smallskip
7. For $M_2$, we have
\begin{align*}
\LL  M_2, \varphi \RR
=\int \int_0^t  \pd_\xi (\se^*(t - s) \varphi)(v(x,s),x) \A(v + \sigma(x) W) :\big(\nabla (\sigma(x) W)\otimes \nabla(\sigma(x) W)\big)\,\d s\,\d x.
\end{align*}
We notice that
\begin{align*}
\pd_\xi (\se(t-s)\varphi) (v(x,s),x)= (t - s) F''(v(x,s)) \cdot \nabla (\se^*(t - s)\varphi) + \A'(v(x,s)):\nabla^2 (\se^*(t - s) \varphi),
\end{align*}
as explained in (\ref{eq:pd_xi_se}).
By (\ref{eq:conditions_uniqueness}), we have assumed that
\begin{align*}
|F''(\xi)| \lesssim  1,\qquad
|\A'(\xi)| \lesssim  1.
\end{align*}

Again we have
\begin{align*}
\|\A(v + \sigma(x) W) \|_{L^1_x} \leq  C\big(1 + \|v(s)\|_{L^1_x} + \|\sigma\|_{L^1_x}|W|\big).
\end{align*}

Finally, using the kernel estimate yields
\begin{align*}
&|\LL M_2,\varphi\RR |\\
&\leq  C\int_0^t (t - s)\|F''\|_{L^\infty}\|\nabla S^*(t - s) \varphi\|_{L^\infty_{x,\xi}} \|\nabla \sigma W\|_{L^\infty_x}^2
   \big(1 + \|v(s)\|_{L^1_x} + \|\sigma\|_{L^1_x}|W|\big) \;\d s\\
&\quad + C\int_0^t (t - s)\|\A'\|_{L^\infty}\|\nabla^2 S^*(t - s) \varphi\|_{L^\infty_{x,\xi}} \|\nabla \sigma W\|_{L^\infty_x}^2
     \big(1 + \|v(s)\|_{L^1_x} + \|\sigma\|_{L^1_x}|W|\big) \;\d s\\
&\leq  C\tilde{\kappa}^2\int_0^t (t - s)\big(\|\nabla S^*(t - s) \|+\|\nabla^2 S^*(t - s)\|\big)
\|\varphi\|_{L^\infty_x} \big(1 + \|v(s)\|_{L^1_x} + \|\sigma\|_{L^1_x}|W|\big)\;\d s.
\end{align*}
Therefore, we have
\begin{align*}
&\int_0^T \|M_2(t)\|_{L^1_x} \;\d t  \\
&\leq   C\tilde{\kappa}^2 \int_0^T\int_0^t (t - s)
  \big(\|\nabla S^*(t - s) \|+\|\nabla^2 S^*(t - s)\|\big)
\big(1 + \|v(s)\|_{L^1_x} + \|\sigma\|_{L^1_x}|W|\big) \;\d s\,\d t \\
&\leq  C\tilde{\kappa}^2 \int_0^T\int_0^t (t - s)\big((\gamma (t - s))^{-\frac{1}{2\alpha}}+(\gamma (t - s))^{-\frac{1}{\alpha}}\big)
   e^{-\theta (t - s)}\big(1 + \|v(s)\|_{L^1_x} + \|\sigma\|_{L^1_x}|W|\big)\,\d s\, \d t,
\end{align*}
and
\begin{align}
&\int_0^T \|M_2(t)\|_{L^1_x} \;\d t \label{eq:smallball5}\\
&\leq  C \tilde{\kappa}^2 \Big(\gamma^{-\frac{1}{2\alpha}} \theta^{\frac{1}{2\alpha} - 2} \big|\Gamma(2 - \frac{1}{2\alpha})\big|
 + \gamma^{-\frac{1}{\alpha}} \theta^{\frac{1}{\alpha} - 2} \big|\Gamma(2 - \frac{1}{\alpha})\big|\Big)\int_0^T \big(1 + \|v(s)\|_{L^1_x}\big) \;\d s.\notag
\end{align}

\smallskip
8. For the kinetic measure $M_1$, we use the total variation estimate again.
First, with $\varphi \in L^\infty_x$,
\begin{align*}
&|\LL M_1, \varphi\RR| \\
&= \left| \iint \int_0^t \pd_\xi (\se^*(t - s) \varphi)\; \d(m^v + N^v)(x,\xi,s) \right|\\
&= \left|\iint \int_0^t \big((t - s) F''(\xi) \cdot \nabla (\se^*(t - s)\varphi) + \A'(\xi) : \nabla^2 (\se^*(t - s) \varphi)\big)\; \d(m^v + N^v)(x,\xi,s)\right|\\
&\leq  C\|\varphi\|_{L^\infty_x} \iint \int_0^t \big(\gamma^{-\frac{1}{2\alpha}}(t - s)^{1 - \frac{1}{2\alpha}}
  +\gamma^{-\frac{1}{\alpha}}(t - s)^{1 - \frac{1}{\alpha}}\big)e^{-\theta (t - s)} \d|m^v + N^v|(x,\xi,s),
\end{align*}
so that
\begin{align*}
&\int_0^T \| M_1(t)\|_{L^1_x}\;\d t \\
&\leq  C\Big(\gamma^{-\frac{1}{2\alpha}} \theta^{\frac{1}{2\alpha} - 2} \big|\Gamma(2 - \frac{1}{2\alpha})\big|
       +\gamma^{-\frac{1}{\alpha}} \theta^{\frac{1}{\alpha} - 2} \big|\Gamma(2 - \frac{1}{\alpha})\big|\Big)|m^v + N^v|(\R\times \mathbb{T}^d\times [0,T]).
\end{align*}

As in Lemma \ref{thm:TVestimate1}, we test equation (\ref{eq:kinetic_equation2}) against $\xi$ to find
\begin{align*}
&\frac{1}{2}\|v(t)\|_{L^2_x}^2  + |m^v + N^v|(\mathbb{R}\times \mathbb{T}^d\times[0,t])\\
&\leq  \frac{1}{2} \|\tilde{u}_0\|_{L^2_x}^2 + \left|\int_0^t \iint \xi \big(F'(\xi)  - F'(\xi + \sigma(x) W)\big) \cdot \nabla \chi^v \;\d \xi \,\d x\,\d s\right|\\
& \quad + \left|\int_0^t \int v F'(v + \sigma(x) W) \cdot \nabla (\sigma(x) W) \;\d x\,\d s\right|\\
& \quad + \left|\int_0^t \int \A(v + \sigma(x) W) : \big(\nabla(\sigma W) \otimes \nabla (\sigma W)\big) \;\d x\,\d s\right|\\
&\leq  \frac{1}{2} \|\tilde{u}_0\|_{L^2_x}^2 + \left|\int_0^t \iint \xi F''(\xi + \sigma(x) W)) \cdot \nabla(\sigma(x) W) \chi^v \;\d \xi \,\d x\,\d s\right|\\
&\quad  + C\tilde{\kappa}(1+\tilde{\kappa}) \left|\int_0^t \int v \big(1 + |v| + |\sigma(x) W|\big) \;\d x\,\d s\right|\\
&\leq  \frac{1}{2} \|\tilde{u}_0\|_{L^2_x}^2 + C\tilde{\kappa} \int_0^t \big(1 + \|v(s)\|_{L^2_x}^2\big)\;\d s.
\end{align*}

Then Gronwall's inequality implies
\begin{align*}
|m^v + N^v|\leq C e^{C\tilde{\kappa} t} \big(\|\tilde{u}_0\|_{L^2_x}^2 + 1\big) \leq C e^{C\tilde{\kappa} t} \big( \hat{\kappa}^2 \eps^{-d} + 1\big).
\end{align*}
Therefore, we have
\begin{align}
&\int_0^T \| M_1(t)\|_{L^1_x}\;\d t \label{eq:smallball6}\\
&\leq  C\Big(\gamma^{-\frac{1}{2\alpha}} \theta^{\frac{1}{2\alpha} - 2} \big|\Gamma(2 - \frac{1}{2\alpha})\big|
+ \gamma^{-\frac{1}{\alpha}} \theta^{\frac{1}{\alpha} - 2} \big|\Gamma(2 - \frac{1}{\alpha})\big|\Big) e^{C\tilde{\kappa} T} \big( \hat{\kappa}^2 \eps^{-d} + 1\big).
\notag
\end{align}

\smallskip
9. \textit{Completion of the estimates}. First, we set $\alpha \leq \frac{1}{2}$ so that the instances of $|\Gamma|$ are never evaluated at a negative integer,
where it is infinite.
With the finite bound of all the values of $|\Gamma|$ and finitely many instances of $\Gamma$ in estimates (\ref{eq:smallball1})--(\ref{eq:smallball6}) above,
we can write those estimates as
\begin{align*}
&\fint_0^T \|v^0(t)\|_{L^1_x}\;\d t  \leq CT^{-\frac{1}{2}}\gamma^{\frac{r}{2}} \|u_0\|_{L^1_x}^{\frac{1}{2}},\\
&\fint_0^T \|v^\flat(t)\|_{L^1_x}\;\d t  \leq  C\gamma^{\frac{r + 1}{2}}
    \Big(\fint_0^T \|v\|_{L^1_x}\;\d t \Big)^{\frac{1}{2}}
     \leq  C\gamma^{\frac{r + 1}{2}}  \fint_0^T \big(1 + \|v\|_{L^1_x}\big)\;\d t ,\\
&\fint_0^T \|v^\sharp_F + v^\sharp_A\|_{L^1_x}\;\d t
   \leq C\tilde{\kappa} \Big(\gamma^{-\frac{d + 2}{4\alpha}} \theta^{\frac{d + 2}{4\alpha} - 1} + \gamma^{-\frac{d}{4\alpha}} \theta^{\frac{d}{4\alpha} - 1}
      + \gamma^{-\frac{d + 4}{4\alpha}} \theta^{\frac{d + 4}{4\alpha} - 1}\Big) \fint_0^T \|v(t)\|_{L^1_x} \;\d t ,\\
&\fint_0^T \|M_F(t) + M_A(t)\|_{L^1_x}\;\d t  \leq  C\tilde{\kappa} \left(\theta^{-1} + \gamma^{-\frac{1}{2\alpha}} \theta^{\frac{1}{2\alpha} - 1}\right)
    \fint_0^T \big(1 + \|v(t)\|_{L^1_x} \big)\;\d t ,\\
&\fint_0^T \|M_2(t)\|_{L^1_x} \;\d t \leq  C\tilde{\kappa}^2 \left( \gamma^{-\frac{1}{2\alpha}} \theta^{\frac{1}{2\alpha} - 2}
    + \gamma^{-\frac{1}{\alpha}} \theta^{\frac{1}{\alpha} - 2}\right) \fint_0^T\big(1 + \|v(t)\|_{L^1_x}\big)\;\d t ,\\
&\fint_0^T \|M_1\|_{L^1_x} \;\d t  \leq \frac{C}{T} \left( \gamma^{-\frac{1}{2\alpha}} \theta^{\frac{1}{2\alpha} - 2}
+  \gamma^{-\frac{1}{\alpha}} \theta^{\frac{1}{\alpha} - 2}\right) e^{C_0\tilde{\kappa} T}\big(\hat{\kappa}^2 \eps^{-d} + 1\big).
\end{align*}

Combining all the estimates together yields
\begin{align*}
\fint_0^T \|v(t)\|_{L^1_x} \;\d t
\leq&\,  C_1T^{-\frac{1}{2}}\gamma^{\frac{r}{2}} \|u_0\|_{L^1_x}^{\frac{1}{2}}
+ \Big(C_2\gamma^{\frac{r + 1}{2}}
    + C_3(\gamma,\theta) (\tilde{\kappa} + \tilde{\kappa}^2)\Big) \Big(1 + \fint_0^T \|v\|_{L^1_x}\;\d s \Big)\\
  &\, + C_4(\gamma,\theta) T^{-1}e^{c\tilde{\kappa} T} (\hat{\kappa}^2 \eps^{-d} + 1).
\end{align*}

We can choose $\gamma$, $\theta$, $T$, and $\tilde{\kappa}$ in that order so that, for some $q$ to be determined,
\begin{align*}
C_2 \gamma^{\frac{r +1}{2}} \leq q\eps.
\end{align*}
For $\alpha < \frac{1}{4}$,
we see that every $\theta$ has positive power above, except in $C_3(\gamma,\theta)$ for the estimate of $\|M_F + M_A\|_{L^1_{x,t}}$,
so that $C(\rho,\theta)$ involves $\theta$ with positive power.
Therefore, we choose $\theta$ such that
\[
C_4(\gamma,\theta) < 1,
\]
so that we can choose $T$ sufficiently large such that
\[
C_1T^{-\frac{1}{2}} \|u_0\|_{L^1_x}^{\frac{1}{2}}
  + C_4(\gamma,\theta)T^{-1}\big(\hat{\kappa}^2\eps^{-d} + 1\big)
  \leq q\eps.
\]
Finally, we choose $\tilde{\kappa}$ such that
\[
C_3(\gamma, \theta) \tilde{\kappa}(1 + \tilde{\kappa}) \leq q\eps,
\]
and
\[
C_0 \tilde{\kappa} T \leq q \eps.
\]

By taking $q$ sufficiently small, we have
\begin{align*}
\fint_0^T \|v(t)\|_{L^1_x} \d t  \leq \frac{\eps}{4}, \qquad \fint_0^T \|\tilde{u}(t) \|_{L^1_x} \;\d t  \leq \frac{3\eps}{8},
\end{align*}
which leads to
\begin{align*}
\fint_0^T \|u(t)\|_{L^1_x} \;\d t  \leq \frac{\eps}{2},
\end{align*}
by the following $L^1$-contraction property of the pathwise solutions:

\begin{lma}
Let $\A$ be symmetric positive-semi-definite, and let both $\A(\xi)$ and $F(\xi)$ be H\"older continuous
and of polynomial growth.
Then, for each initial data function $u_0$, there exists a unique measurable $u: \mathbb{T}^d\times [0,T] \times \Omega \to \mathbb{R}$
solving  \eqref{3.5a} in the sense of Definition {\rm \ref{def:ksolution}}.
Moreover,
for $u_0, \tilde{u}_0 \in L^1(\mathbb{T}^d)$,
$$
\|u(t)-\tilde{u}(t)\|_{L^1_x}\le \|u_0-\tilde{u}_0\|_{L^1_x} \qquad \mbox{almost surely}.
$$
\end{lma}

\medskip
For our case,
$$
\|u(t)-\tilde{u}(t)\|_{L^1_x}\le \|u_0-\tilde{u}_0\|_{L^1_x}\le \frac{\eps}{8} \qquad \mbox{almost surely}.
$$
This completes the proof.
\end{proof}

\begin{rem}
This almost sure $L^1$--contraction property is not available in the multiplicative case.
In fact, it is not available in many other situations, such as in systems or for non-conservative equations
where the $L^1$-contraction is not ready to provide a stability condition.
It is of interest to study the uniqueness and ergodicity properties of invariant measures for equations without this property.
\end{rem}

\subsection{Uniqueness III: Conclusion} \label{sec:uniquenessIII}

Let $u^1_0$ and $u^2_0$ be in $L^1_x$.
For a given $\eps > 0$, let $\tilde{u}^1_0$ and $\tilde{u}^2_0$ be in $L^3_x$
such that
$$
\|u^i_0 - \tilde{u}^i_0\|_{L^1_x} \leq \frac{\eps}{4}.
$$
Denote their corresponding solutions $u^1,u^2,\tilde{u}^1$, and $\tilde{u}^2$, respectively.
Let us now put $\tilde{u}^1$ and $\tilde{u}^2$, in place of $u$ and $v$ in \S \ref{sec:uniquenessI},
and the corresponding sequence of stopping times constructed recursively in (\ref{eq:stopping_time}):
\begin{align*}
\mathcal{T}_l = \inf\{ t \geq \mathcal{T}_{l - 1} + T : \|\tilde{u}^1(t) \|_{L^1_x} + \|\tilde{u}^2(t)\|_{L^1_x} \leq 2 \hat{\kappa}\}.
\end{align*}

As in \cite{DV2015}, choosing $T$ and $\tilde{\kappa}$ as above, we obtain by the $L^1$--contraction (for the additive noise, there is the $L^1$-contraction almost sure):
\begin{align*}
&\mathbb{P}\big\{\fint_{\mathcal{T}_l}^{\mathcal{T}_l + T} \|u^1(s) - u^2(s)\|_{L^1_x }\;\d s \leq \eps\,\, \big| \,\mathscr{F}_{\mathcal{T}_l}\big\}\\
&\geq  \mathbb{P} \big\{\fint_{\mathcal{T}_l}^{\mathcal{T}_l + T} \|\tilde{u}^1(s) - \tilde{u}^2(s)\|_{L^1_x} \;\d s \leq \frac{\eps}{2}\,\, \big|\,\mathscr{F}_{\mathcal{T}_l}\big\}\\
&\geq  \mathbb{P}\big\{\sup_{t \in [\mathcal{T}_l, \mathcal{T}_l + T]} \|\sigma W(t) - \sigma W(\mathcal{T}_l)\|_{W^{1,\infty}_x} \leq \tilde{\kappa}\,\, \big|\,\mathscr{F}_{\mathcal{T}_l}\big\}.
\end{align*}

Since $\tilde{\kappa} > 0$, and $\sigma$ is Lipschitz,
we can denote the positive probability of the event as $\lambda$.
By the strong Markov property, we know that it does not change with $l$.

This allows us to write
\begin{align*}
&\mathbb{P}\big\{\fint_{\mathcal{T}_l}^{\mathcal{T}_l + T} \|u^1(s) - u^2(s)\|_{L^1_x }\;\d s \geq \eps \,\,\, \mbox{ for } l = l_0, l_0 + 1, \ldots, l_0 + k \big\}
\leq (1 - \lambda)^k,
\end{align*}
so that
\begin{align*}
&\mathbb{P}\big\{\lim_{l \to \infty} \fint_{\mathcal{T}_l}^{\mathcal{T}_l + T} \|u^1(s) - u^2(s)\|_{L^1_x}\;\d s \geq \eps\big\}\\
&=  \mathbb{P}\big\{\exists l_0 \,\,\forall l \geq l_0 \,:\,  \fint_{\mathcal{T}_l}^{\mathcal{T}_l + T} \|u^1(s) - u^2(s)\|_{L^1_x}\;\d s \geq \eps\big\}\\
&=  0.
\end{align*}
This limit exists as $s \mapsto \|u^1(s) - u^2(s)\|_{L^1_x}$ is non-increasing, by the $L^1$--contraction property.
Then, by the same property,
\[
\mathbb{P}\big\{\lim_{t \to \infty} \|u^1(t) - u^2(t) \|_{L^1_x} \geq \eps\big\} = 0.
\]
Therefore, almost surely,
\[
\lim_{t \to \infty} \|u^1(t) - u^2(t) \|_{L^1_x}  = 0,
\]
which implies the uniqueness of the invariant measure.

\section{Further Developments, Problems, and Challenges}

In this section, we discuss some further developments, problems, and challenges in this direction.

\subsection{Further problems}
There are several natural problems that follow from the analysis discussed above.
We restrict ourselves again to nonlinear conservation laws driven by stochastic forcing.

One of the problems is the long-time behavior problem for solutions of nonlinear conservation laws driven by multiplicative noises.
The noises of form $\nabla \cdot (F(u) \circ \d W)$ have been considered in \cite{GS2014,GS2016},
in which the dynamics remains in the zero-spatial-average subspace of $L^1(\mathbb{T}^d)$.

The well-posedness for nonlinear conservation laws driven by multiplicative noises
is quite well understood
from several different perspectives -- the strong entropy stochastic solutions of Feng-Nualart \cite{FN2008}
and of Chen-Ding-Karlsen \cite{CDK2012},
the viscosity solution methods of Bauzet-Vallet-Wittbold\cite{BVW2012},
and the kinetic approach of Debussche-Hofmanov\`a-Vovelle \cite{DHV2014,DV2010},
as we have mentioned above.
Nevertheless, the long-time behavior problem for solutions is wide open,
since there is no effective way to control $\|u(t)\|_{L^1_x}$.

We remark on two aspects of the noises that can affect qualitative long-time behaviors of solutions:
\begin{itemize}
\item[(i)] The question seems to depend heavily on the roots and growth of the noise coefficient
function $\sigma(u)$ -- If the noise is
  degenerate (not cylindrical), say $\sigma(u) = 0$ for certain $u=r \in \mathbb{R}$,
  then $u \equiv r$ is a fixed point of the evolution.
  By the $L^1$--contraction, it is possible to prove certain long-time behavior results
  for the solutions for the unbounded noise coefficient function with one root.
  Both the growth of $\sigma$ and how many roots it possesses affect the long-time behaviors of solutions,
  as is evident also in the analysis of other equations such as the KPP equation (discussed below).
  In the case that $\sigma$ has no roots, there are no fixed points.
  It is possible that the nonlinear conservation laws driven by bounded noises with no roots
  have non-trivial invariant measures.

\item[(ii)]  If the noise is $\sigma(u) \d B = \sum_k g_k(u) \d W^k$, where $B = W^k e_k $ is a cylindrical Wiener process,
the behaviors are expected to be very different from the case that the noise is simply $\sigma(u) \d W$.
\end{itemize}

Another natural direction to consider is the case of nonlinear systems of balance laws.
For this case, such as for the isentropic Euler system, the kinetic formulation is not ``pure" -- it
contains the instances of the solution mixed with the kinetic operator ({\it cf.} \cite{LPT1994b}).
At present, it seems that the methods discussed above
are not directly applicable to the systems.

\subsection{The Navier-Stokes equations}\label{sec:NSE}

The two-dimensional incompressible Navier-Stokes equations (INSEs) driven by stochastic forcing has been a subject of intense interest.
We focus on the analysis of asymptotic behaviors of solutions to keep ourselves from getting sidetracked.

The existence of invariant measures for the 2-D INSEs on a regular bounded domain with a general noise
that is a Gaussian random field and white-in-time has been known at least since \cite{Fla1994}.
The uniqueness and ergodicity for the 2-D INSEs have also been established;  see \cite{FM1995} and the references therein
for such results and further existence results of invariant measures under different conditions.
These results have subsequently been improved, including for the noises that are localized in time
and Gaussian in space, in \cite{BKL2001,BKL2002,MY2002,Mat2002}, and in some references
cited in this paper.

We remark particularly that the corresponding existence  questions
for the 2-D INSEs with multiplicative noises have been established, for example in \cite{FG1995},
via the Skorohod embedding and a Faedo-Galerkin procedure,
which have shown the existence of martingale solutions and stationary martingale solutions,
from which in turn the existence of an invariant measure can be derived.

The asymptotic behaviors of 2-D INSEs driven by white-in-time noises
or Poisson distributed unbounded kick noises have been explored,
and the existence and uniqueness of invariant measures for these systems are known.
See also \cite{Mat2003,KS2006,KS2012} for the related references.

There are also more recent results on INSEs driven by space-time white noises
in 2-D or 3-D; see \cite{DD2002,DD2003,ZZ2015} and the references therein.
For example, it is known that the transition semigroup of the Kolmogorov equation
associated to the 3-D stochastic INSEs driven by a cylindrical
white noise has a unique (and hence ergodic) invariant measure.

The existence of invariant measures for the compressible Navier-Stokes equations, even in the 2-D case, is wide open.

See \cite{EMS2001,MT2016}, and discussions in \cite[Chp. 3]{Madja_2016}
as well as references contained there for further treatments on ergodicity results; also see \cite{MadjaWang}.

\subsection{The asymptotic strong Feller property}
Using the 2-D INSEs as a springboard,
the notion of the {\it asymptotic strong Feller} property has been introduced in Hairer-Mattingley \cite{HM2006}
as a weaker and more natural replacement of the sufficient ``strong Feller" property
(Definition \ref{defin:strongfeller}) in dissipative infinite-dimensional systems,
the possession of which guarantees the uniqueness of an invariant measure.
In the finite-dimensional case of SDEs, there is a related notion of {eventual} strong Feller property,
for which sufficient conditions are given in \cite{Bie2011}.

The definition of asymptotic strong Feller property depends on a preliminary definition:

\begin{defin}[Totally separating system]
A pseudo-metric is a function $d : \mX^2 \to \mathbb{R}^+_0$ for which $d(x,x) = 0$
and the triangle inequality is satisfied,
and $d_1 \geq d_2$ if the inequality holds for all arguments $(x,y) \in \mX^2$.
Let $\{d_k\}_{k = 0}^\infty$ be an increasing sequence of pseudo-metrics
on a Polish space $\mX$.
Then $\{d_k\}_{k = 0}^\infty$ is a {\it totally separating system} of pseudo-metrics if
\[
\lim_{k \to 0} d_k = 1
\]
pointwise everywhere off the diagonal on $\mX^2$.
\end{defin}

Then the asymptotic strong Feller property is defined as follows:

\begin{defin}[Asymptotic Strong Feller property]
A Markov transition semigroup $\mathscr{P}_t$ on a Polish space $X$ is {\it asymptotically strong Feller}
at $x$ if there exist both a totally separating system of pseudo-metrics $\{d_k\}_{k = 0}^\infty$
and an increasing sequence of times $t_k$ such that
$$
\inf_{U \in \nb(x)} \limsup_{k \to \infty} \sup_{y \in U}  \|P_{t_k}(x,\cdot) - P_{t_k}(y,\cdot)\|_{d_k} = 0,
$$
where $\nb(x)$ is the collection of open sets containing $x$, $P$ is the transition probabilities associated
to $\mathscr{P}$, and $\|P_1 - P_2\|_{d_k}$ is the norm given by
$$
\|P_1 - P_2\|_{d_k} = \inf \int_{\mathfrak{X}^2} d_k(w,z) \Pi(\d w, \d z),
$$
the infimum being taken over all positive measures on $\mathfrak{X}^2$ with marginals $P_1$ and $P_2$.
\end{defin}

The idea behind the asymptotic strong Feller condition is that ergodicity is preserved even if the stochastic forcing is restricted
to a few unstable modes, and dissipated in the others.
Using this idea, the ergodicity of the 2-D stochastic INSE with degenerate noise has been established (see \cite{HM2006}).
Some results of ergodicity for the 3-D INSEs driven by mildly degenerate noise
relying on the strong asymptotic Feller property have also established (see \cite{RX2011,RZ2009} and the references cited therein).

\subsection{The KPP equation and multiplicative noises}

The Kolmogorov-Petrovsky-Piskunov equation (KPP)
is given by
\begin{align*}
&\pd_t u = \nabla \cdot (\A(x,t) \nabla u) + h(u) + g(u) \pd_t W,\\
&u|_{t=0}=\varphi.
\end{align*}
Attention is often restricted to the case in which $g$ and $h$ both vanish at the
two points $a, b\in \mathbb{R}$, and $g, h > 0 $ on $(a,b)$.
In this way, the asymptotic size is controlled in $L^1$.

It has been shown in Chueshov-Villermot \cite{CV1996a,CV1996b,CV1998a,CV1998b,CV1998c,CV2000}
that, for the semilinear equation with $h(u) = s g(u)$,
evolution on a bounded, open domain
with zero Neumann boundary condition
is bounded in space.
Moreover, the notion of stability in probability has also been introduced in \cite{CV1998b}:

\begin{defin}[Stability in probability]
A function $u_f$ is {\it stable in probability} if, for every $\eps > 0$,
the following relation holds:
\[
\lim_{\|\varphi - u_f\|_{L^\infty} \to 0}
\mathbb{P}\big\{ \omega \in \Omega : \sup_{t \in \mathbb{R}^+\setminus \{0\}}\|u_\varphi(s,\cdot,t,\omega) - u_f\|_{L^\infty} > \eps\big\} = 0.
\]
Otherwise, $u_f$ is {\it unstable in probability}.

A function $u_f$ is {\it globally asymptotically stable in probability} if it is stable in probability and
\[
\mathbb{P}\big\{ \omega \in \Omega : \lim_{t  \to \infty}\|u_\varphi(s,\cdot,t,\omega) - u_f\|_{L^\infty} = 0\big\} = 1.
\]
\end{defin}

By considering the moments of the spatial average,
it has been shown that the constant functions $u_1 = a$ and $u_2=b$ are fixed points whose stability in probability
depends on the values of $s$.
The results of \cite{CV2000} have been refined, say in \cite{BS2005},
and the properties of the global attractor, including the computation of exact Lyapunov exponents
in a decay scenario have been derived.

As we have remarked,
the main reason that multiplicative noises complicate the analysis of stochastic PDEs
is that one fails to have much control over the spatial average,
except when additional restrictions on the noise and initial conditions are specified.
When the noise has a root, that constant is immediately a fixed point.
This cannot be avoided even when working over the non-compact domain $\mathbb{R}$ because the $L^p$ boundedness often relies on the space that is compact,
and is a difficulty we have to overcome in order to gain a deeper understanding of the asymptotic behaviors of solutions.

\subsection{Large deviation principles}

Beyond the existence and uniqueness of invariant measures,
large deviation principles touch on their specific properties.
Whilst it goes some way outside the scope of this survey
even to introduce the theory of large deviations,
which attempts to characterize the limiting behavior of a family of probability measures
(in our case, invariant measures) depending on some parameter
by using a {\em rate function},
we should be remiss to neglect mentioning it altogether;
two vintage references to the subject are \cite{DS1989,FW1998}.
More modern treatments can be found in \cite{DE1997,DZ1998,Hol2008,Var2016}
 and the references cited therein.
 Of particular interest has been the ``zero-noise'' limit of stochastic equations in which
 one looks at the stochastic equations with a small parameter $\ep$ multiplied to the noise.
 Questions of large deviation type also arise in stochastic homogenization theory.
 Each of these subjects can justify an independent survey.
 Pertaining specifically to stochastic conservation laws, the literature is, however, more sparse.
 Going some way outside the classical Freidlin-Wentzell theory,
 some results have been announced pertaining to large deviation estimates for stochastic conservation laws.
 Specifically,
 in \cite{Mar2010}, large deviation principles have been investigated and derived
 in the limit of jointly vanishing noise and viscosity by using delicate scaling arguments.
 Notably,  in \cite{BBC2017}, the bounds for the rate function have also been derived in the vanishing
 viscosity limit only,
 so that the noise is allowed in the limit, and in the multidimensional setting.
 Finally, we mention the more recent work \cite{DWZZ2018} and the references cited therein,
 large deviation principles have been derived for the first-order scalar conservation laws with
 small {\em multiplicative} noise on $\T^d$ in the zero-noise limit
 by using the Freidlin-Wentzell theory.
 Much still remains to be explored in this direction.

\bigskip
\textbf{Acknowledgements}. $\,$
The research of Gui-Qiang G. Chen was supported in part by
the UK
Engineering and Physical Sciences Research Council Award
EP/E035027/1 and
EP/L015811/1, and the Royal Society--Wolfson Research Merit Award (UK).
The research of Peter Pang was
supported in part by the UK
Engineering and Physical Sciences Research Council Award
EP/E035027/1 and
EP/L015811/1, and an Oxford Croucher Scholarship.

\bibliographystyle{plain}

\end{document}